\let\isout\sout \renewcommand{\sout}[1]{\ifmmode\text{\isout{\ensuremath{{\color{blue}#1}}}}\else\isout{{\color{blue}#1}}\fi}
\newcommand{\norm}[1]{\left\Vert#1\right\Vert}
\newcommand{\normx}[1]{\Vert#1\Vert}
\newcommand{\normxsz}[1]{\Vert#1\Vert_{\Sigma_0}}
\newcommand{\normxst}[1]{\Vert#1\Vert_{\Sigma_T}}
\newcommand{\normxss}[1]{\Vert#1\Vert_{\Sigma_s}}
\newcommand{\snorm}[1]{\left|#1\right|}
\newcommand{\snormgh}[1]{\left|#1\right|_{g_h}}
\newcommand{\norme}[1]{\left\Vert\hskip -0.8pt \left\vert #1 \right\vert\hskip -0.8pt\right\Vert}
\newcommand{\normeh}[1]{\left\Vert\hskip -0.8pt \left\vert #1 \right\vert\hskip -0.8pt\right\Vert_h}
\newcommand{\normes}[1]{\left\Vert\hskip -0.8pt \left\vert #1 \right\vert\hskip -0.8pt\right\Vert_*}
\newcommand{\normehs}[1]{\left\Vert\hskip -0.8pt \left\vert #1 \right\vert\hskip -0.8pt\right\Vert_{h,*}}
\newcommand{\pd}[1]{\left\langle #1\right\rangle}
\newcommand{\bk}[1]{\left(#1\right)}
\newcommand{\set}[1]{\left\{#1\right\}}
\newcommand{\Th}{\mathcal{T}_h}
\newcommand{\Thact}{\mathcal{T}_h^{act}}
\newcommand{\Thcut}{\mathcal{T}_h^{cut}}
\newcommand{\Thint}{\mathcal{T}_h^{int}}
\newcommand{\Fh}{\mathcal{F}_h}
\newcommand{\Fint}{\mathcal{F}^{int}_h}
\newcommand{\Pl}{\mathbb{P}_1}
\newcommand{\R}{\mathbb{R}}
\newcommand{\II}{\mathrm{I}}
\newcommand{\RR}{\mathrm{R}}
\newcommand{\hioQ}{H^{1,0}(Q)}
\newcommand{\hsioQ}{H_s^{1,0}(Q)}
\newcommand{\hnegioQ}{H^{-1,0}(Q)}
\newcommand{\ciQ}{C^1(\overline{Q})}
\newcommand{\coiQ}{C_0^1(Q)}
\newcommand{\afrac}{a^{\frac{1}{2}}}
\newcommand{\hfrac}{h^{\frac{1}{2}}}
\newcommand{\hnegfrac}{h^{-\frac{1}{2}}}
\newcommand{\al}{\alpha}
\newcommand{\be}{\beta}
\newcommand{\de}{\delta}
\newcommand{\defrac}{\delta^{\frac{1}{2}}}
\newcommand{\denegfrac}{\delta^{-\frac{1}{2}}}
\newcommand{\ep}{\varepsilon}
\newcommand{\ga}{\gamma}
\newcommand{\gafrac}{\gamma^{\frac{1}{2}}}
\newcommand{\ganegfrac}{\gamma^{-\frac{1}{2}}}
\newcommand{\na}{\nabla}
\newcommand{\Om}{\Omega}
\newcommand{\wOm}{\widetilde{\Omega}}
\newcommand{\wQ}{\widetilde{Q}}
\newcommand{\Qact}{Q^{act}_h}
\newcommand{\Fgp}{\mathcal{F}_h^{GP}}
\newcommand{\pa}{\partial}
\newcommand{\Si}{\Sigma}
\newcommand{\rd}{\,\mathrm{d}}
\newtheorem{THeorem}{Theorem}
    \newtheorem{LEmma}[THeorem]{Lemma}
    \newtheorem{REmark}{Remark}[section]
    \newtheorem{ASsumption}{Assumption}
    \newtheorem{EXample}{Example}[section]
\begin{document}
\begin{frontmatter}

\title{A Stabilized Unfitted Space-time Finite Element Method for Parabolic Problems on Moving Domains}
\author[nju]{Ruizhi Wang\fnref{myfootnote}}
\ead{rzwang@smail.nju.edu.cn}

\author[nju]{Weibing Deng\corref{mycorrespondingauthor}\fnref{myfootnote1}}
\ead{wbdeng@nju.edu.cn}
\cortext[mycorrespondingauthor]{Corresponding author}
\fntext[myfootnote1]{The work of this author was
supported by the National Key R\&D Program of China (2024YFA1012600), and by the NSF of China grant 12171237.}
\address[nju]{School of Mathematics,
Nanjing University, Nanjing 210093, People's Republic of China}

\begin{abstract}
This paper presents a space-time finite element method (FEM) based on an unfitted mesh for solving parabolic problems on moving domains. Unlike other unfitted space-time finite element approaches that commonly employ the discontinuous Galerkin (DG) method for time-stepping, the proposed method employs a fully coupled space-time discretization. To stabilize the time-advection term, the streamline upwind Petrov–Galerkin (SUPG) scheme is applied in the temporal direction. A ghost penalty stabilization term is further incorporated to mitigate the small cut issue, thereby ensuring the well-conditioning of the stiffness matrix. Moreover, an a priori error estimate is derived in a discrete energy norm, which achieves an optimal convergence rate with respect to the mesh size. In particular, a space-time Poincaré–Friedrichs inequality is established to support the condition number analysis. Several numerical examples are provided to validate the theoretical findings.
\end{abstract}

\begin{keyword}
Moving domain \sep Space-time FEM \sep Cut finite element \sep Unfitted mesh \sep Ghost penalty.

\MSC[2021] 65M60\sep  65M12 \sep 65M85 \sep 39A45
\end{keyword}
\end{frontmatter}


\setcounter{equation}{0}\qquad 
\numberwithin{equation}{section}
\setcounter{THeorem}{0}
\numberwithin{THeorem}{section}

\section{Introduction}\label{Sec1}
Parabolic problems on moving domains arise in various fields such as physics, chemistry, engineering, and biomedical applications. Typical examples include the Stefan problem involving melting and solidification processes \cite{stefanbook}, mass transport in multiphase flows \cite{twophase}, and fluid–structure interactions \cite{fluid}. The classical FEM for solving parabolic problems is the method of lines, which proceeds by first discretizing in space and then in time \cite{parabolic}. Although this approach is efficient for stationary domains, it becomes difficult to apply to moving domain problems when using conventional time-marching strategies such as the backward Euler method. The main challenge lies in the insufficient transfer of information from the previous time level to the current one, due to domain motion. Another widely used technique is the arbitrary Lagrangian–Eulerian (ALE) method \cite{ALE}, which maps the moving domain to a fixed reference domain where meshing is performed. However, the ALE method tends to become inefficient in cases involving large deformations or topological changes of the domain.

Instead of treating space and time separately, the space-time FEMs consider time as an additional spatial dimension, thereby raising the problem by one dimension. For parabolic problems, most space-time methods employ finite element spaces that are continuous in space but discontinuous in time, solving the discrete problem within each time slab (see for example \cite{stdg1978,BabuskaSTDG,ErikSTDG}).  Methods that are discontinuous in both space and time have also been explored, as seen in \cite{stDGDG,stDGJJW}. 

Recently, growing attention has been directed toward space-time FEMs that are continuous in both space and time \cite{SteinbachST,Moore2016,STDG,Moore2018,Langer2019,fitST}. These methods discretize the entire space-time domain using unstructured meshes and apply standard finite element techniques. A key advantage of this approach is its suitability for parallel computation over the full space-time domain, as well as its potential for space-time adaptive refinement. For instance, Steinbach et al. \cite{SteinbachST} introduced an unstructured conforming space-time FEM for general parabolic equations.
Moore et al. \cite{Moore2016} proposed a space-time isogeometric analysis using time-upwind test functions, applicable to both fixed and moving domains, which was later extended to multipatch isogeometric analysis in \cite{STDG}.
Localized stabilization strategies were further developed in \cite{Moore2018,Langer2019}. Additionally, a fitted space-time FEM was presented in \cite{fitST} for advection-diffusion problems with moving interfaces. For a comprehensive overview of space-time FEMs for parabolic problems, we refer to \cite{SToverview}.
However, when dealing with complex moving domain problems, the space-time methods mentioned above require the generation of a fitted and regular space-time mesh. This task becomes particularly challenging in three-dimensional spatial settings, where it entails constructing four-dimensional simplex meshes \cite{fitmesh}.

To circumvent the challenges of generating high-quality fitted meshes, unfitted methods have attracted considerable attention in recent decades. Notable examples include the CutFEM \cite{cutFEM}, the finite cell method \cite{FCM}, the shift boundary method \cite{SBM}, the aggregated unfitted FEM \cite{BADIA}, the extended FEM (XFEM) \cite{xfem}, the generalized FEM \cite{gfem} and the immersed interface method \cite{ifem}. These approaches eliminate the need for body-fitted meshes, allowing the use of simple background grids—such as Cartesian meshes—thereby significantly reducing computational cost. 
However, unfitted methods often face the so-called small cut problem. When the background mesh is arbitrarily intersected by the physical domain, certain elements may contain only very small cut portions, leading to a severely ill-conditioned stiffness matrix that can cause numerical instability \cite{cdnum}. To address this issue, two main types of stabilization techniques have been developed. The first approach introduces additional stabilization terms into the bilinear form, as seen in the ghost penalty method \cite{BURMAN10,BURMAN12} and the finite cell method \cite{FCM}. The second strategy aggregates small cut elements into larger patches to improve matrix conditioning, a technique employed in aggregated unfitted FEMs and related works (see \cite{Johansson,BADIA,HUANG,Chen2021}).

Several unfitted FEMs have been developed for solving time-dependent problems involving moving domains or moving interfaces. In \cite{EulerFEM,EulerStokes}, a time-stepping scheme is introduced for time-dependent moving domain problems, which requires a discrete extension of the solution from the previous time step to the domain at the next time step.
This idea has also been applied in \cite{CLAUS} to solve two-phase flow problems with moving interfaces. In \cite{cutStefan}, a CutFEM for the one-phase Stefan--Signorini problem with applications in laser manufacturing is proposed. In \cite{MA}, an interface-tracking algorithm combined with high-order unfitted characteristic finite element methods is developed for moving interface problems governed by the Oseen equations. Furthermore, several immersed finite element methods incorporating time-stepping strategies have been proposed in \cite{IFEM1,IFEM2,IFEM3} for solving parabolic moving interface problems. 
Meanwhile, a space-time FEM based on discontinuous Galerkin (DG) time-stepping has been proposed in \cite{Preuss,XFEMdg,XFEMdg3d,BADIA2023} to handle moving domains and interfaces. This approach has also been adapted to moving surface problems \cite{surface} and coupled bulk-surface problems \cite{bulkSurface}. More recently, the method has been extended to a continuous-in-time space-time formulation \cite{Heimann2021,Heimann2023}.
However, all the aforementioned methods are sequential in time and require sufficiently small time steps to maintain accuracy when the domain undergoes high oscillations or large deformations over time.

In this work, we propose a new unfitted space-time FEM that employs an unstructured mesh over the space-time domain along with piecewise linear finite elements. The use of unfitted meshes significantly simplifies mesh generation and reduces computational cost compared to fitted space-time approaches. A key advantage of our method over unfitted discontinuous Galerkin (DG) time-stepping schemes lies in its flexibility in space-time discretization. This flexibility enables local mesh refinement in regions where the domain undergoes rapid motion or large deformations, thereby facilitating fully adaptive resolution in both space and time while reducing the overall degrees of freedom. Furthermore, the resulting discrete system can be solved in a fully parallel manner via domain decomposition and efficient space-time preconditioning techniques, as exemplified in references \cite{Moore2016,AMG}. 
To mitigate the small cut issue, ghost penalty stabilization is employed to ensure a well-conditioned stiffness matrix. A streamline upwind Petrov–Galerkin (SUPG) scheme, inspired by \cite{Moore2016,Langer2019}, is applied to stabilize the time-advection term.

Numerical experiments in one and two spatial dimensions show that the proposed method attains the optimal convergence rate in the $H^{1,0}$ norm, along with an optimal growth rate for the condition number of the stiffness matrix. The robustness of the method is verified through a small cut test, confirming its stability irrespective of how the physical domain intersects the background mesh. A boundary layer problem is also presented to highlight the effectiveness of the SUPG scheme.

We emphasize that the primary theoretical challenge lies in the condition number estimate, which requires bounding the $L^2$ norm by a discrete energy norm. For stationary domains, this estimate is typically derived by first applying the Poincaré–Friedrichs inequality in space and then integrating in time. In the case of moving domains, however, the constant in the Poincaré–Friedrichs inequality becomes time-dependent. To integrate this inequality over the time interval, the explicit dependence of the constant on the domain variation must be established. Under mild geometric assumptions and with the aid of several auxiliary results from \cite{PF1d, maggi2005}, we have successfully proved such an inequality. 

The remainder of this paper is organized as follows. Section \ref{Sec2} introduces the model problem posed on a  moving domain and presents its variational formulation. Section \ref{Sec3} details the proposed unfitted space-time FEM. The corresponding energy error analysis is provided in Section \ref{Sec4}.  In Section \ref{Sec5}, we initially establish a space-time Poincaré–Friedrichs inequality under mild geometric assumptions, followed by deriving an estimate for the condition number. Numerical experiments to validate our theoretical findings are presented in Section \ref{Sec6}. Finally, Section \ref{Sec7} concludes the paper.

Throughout the paper, the letter $C(C_0,C_1,\cdots)$ represents a general constant that is independent of the mesh size and the position of the boundary relative to the meshes. We also use the shorthand notation $a \lesssim b$ and $b \gtrsim a$ for the inequality $a \leq Cb$ and $b \geq Ca$. 

\section{Model problem}\label{Sec2}

We begin by introducing notation for the moving domain. Let $\Omega(t) \subset \mathbb{R}^d$, with $d = 1, 2, 3$, be an open bounded domain whose boundary $\partial \Omega(t)$ is Lipschitz (for $d = 2, 3$). Here, $t$ indicates the time dependence of the domain. For simplicity, we denote $\Omega(t)$ as $\Omega_t$ and $\partial \Omega(t)$ as $\partial \Omega_t$ when appropriate.
Moreover, we assume that the motion of the domain can be determined in advance based on prior knowledge--for instance, through a level set function $\phi(x,t)$ such that $\Om(t) = \{ x \in \mathbb{R}^{d}: \phi(x,t) < 0 \}$.

Let $T > 0$ and $I = (0, T)$ be the time interval. The associated space-time domain is defined as
$Q := \bigcup_{t \in I} \Omega(t) \times \{t\}$.
The boundary of $Q$ is composed of three parts: the lateral (moving) boundary $\Sigma_s := \bigcup_{t \in I} \partial \Omega(t) \times \{t\}$, the initial boundary $\Sigma_0 := \Omega(0) \times \{0\}$, and the final boundary $\Sigma_T := \Omega(T) \times \{T\}$.
We assume that $\partial Q$ is Lipschitz continuous and piecewise $C^2$-smooth. Moreover, we denote the space gradient operator by $\na_x$ and the space-time gradient operator by $\na$, i.e., $ 
\na_x = \bk{ \frac{\pa}{\pa x_1}, \frac{\pa}{\pa x_2}, \cdots, \frac{\pa}{\pa x_d} }^T, 
\na = \bk{ \frac{\pa}{\pa x_1}, \frac{\pa}{\pa x_2}, \cdots, \frac{\pa}{\pa x_d}, \frac{\pa}{\pa t} }^T
$. 

In this paper, we consider the following parabolic problem on a moving domain:
\begin{equation}\label{P}
	\begin{cases}
		\pa_t u - \na_x \cdot ( a(x,t) \na_x u ) = f & \text{in} ~Q, \\
		u = g & \text{on} ~\Sigma_s, \\
		u(x,0) = u_0 & \text{on} ~\Sigma_0,
	\end{cases}
\end{equation}
where $f \in L^2(Q)$, $g \in H^{\frac{1}{2}}(\Si_s)$, $u_0 \in H^1(\Om_0)$, with $u_0$ and $g$ satisfying the compatibility conditions. The time-dependent diffusion coefficient $a(x,t) \in C^0(\overline{Q})$ is uniformly bounded from below and above by some positive constants, namely,
\begin{equation*}
	0 < a_{min} \leq a(x,t) \leq a_{max},  \quad \forall (x,t) \in Q,
\end{equation*}
and it is also assumed that $\na_x a(x,t) \in L^{\infty}(Q)$. The Sobolev spaces used here will be defined subsequently.

We work under the following mild geometric assumptions:
\begin{ASsumption}\label{assump1}
    There exist positive constants $C_{\Om}, C_{\pa \Om}$ independent of $t$ such that $\snorm{\Om(t)} \leq C_{\Om}$ and $\snorm{\pa\Om(t)} \leq C_{\pa \Om}$ for any $t \in [0,T]$.
\end{ASsumption}
\begin{ASsumption}\label{assump2}
    There exists a finite collection of open or closed bounded subsets $\set{O_i}_{i=1}^N$ in $\R^{d+1}$ such that 
    \begin{itemize}
        \item Each $O_i$ covers a neighborhood of $\Si_s$ and is contained in a time strip $\al_i < t < \be_i$ (or $\al_i \leq t \leq \be_i$), with $\al_i \geq 0$.

\item For each $i = 1, \dots, N$, there exists a $C^2$-diffeomorphism $h_i: \overline{O_i} \to \overline{B(\alpha_i, \beta_i)}$, where
\begin{equation*}
	B(\alpha_i, \beta_i) = \big\{ (\xi, \tau) \in \mathbb{R}^{d+1} : -1 < \xi_k < 1,\ k = 1, \dots, d,\ \alpha_i < \tau < \beta_i \big\},
\end{equation*}
and
\[
h_i(x,t) = \big( \xi_1(x,t), \dots, \xi_d(x,t), \tau(x,t) \big).
\]
We assume that $\tau(x,t) = t$, and that $h_i$ maps:  $\overline{Q \cap O_i}$ bijectively onto $\overline{B(\alpha_i, \beta_i)} \cap \{ \xi_d \geq 0 \}$,
	and  $\overline{\Sigma_s \cap O_i}$ bijectively onto $\overline{B(\alpha_i, \beta_i)} \cap \{ \xi_d = 0 \}$.
        
\end{itemize}
    
\end{ASsumption}

\begin{REmark}
   Together, these assumptions enforce a sufficiently smooth evolution of the moving domain. Specifically, Assumption \ref{assump1} is essential for establishing the space-time Poincaré–Friedrichs inequality, whereas Assumption \ref{assump2} guarantees the validity of the trace operator $\gamma_s: H^{1,0}(Q) \rightarrow L^2(\Sigma_s)$. The latter was originally introduced in \cite{Lions} for analyzing parabolic systems in non-cylindrical domains.
\end{REmark}
\begin{REmark}
    Since $\Si_s$ is piecewise $C^2$ smooth, Assumption \ref{assump2} implies that its non-smooth part is contained in the boundary of a closed set $O_i$, forming a lower-dimensional subset of $\Si_s$.
\end{REmark}

For a real number $r\geq 0$ and a measurable set $U \subset \R^{d+1}$, let $H^r(U)$ denote the standard Sobolev space equipped with the usual norm $\norm{\cdot}_{r,U}$ and semi-norm $\snorm{\cdot}_{r,U}$. By convection, for $r=0$,  we write $\bk{\cdot,\cdot}_U$ and $\norm{\cdot}_U$ for the $L^2(U)$ inner product and norm, omitting the subscript $r$.

We now introduce anisotropic Sobolev spaces following \cite{wuZhuoqun}. Let $\al = (\al_1,\cdots,\al_d)$ be a  multi-index with $|\al| = \sum_{i=1}^d \al_i$. For any $m,n \in \mathbb{N}$ and any measurable set $U \subset \R^{d+1}$, define the Hilbert space
\begin{equation*}
    H^{m,n}(U) = \set{ u \in L^2(U): \pa_x^{\al}u, \pa_t^k u \in L^2(U),~ \forall\, |\al|\leq m, ~0\leq k \leq n},
\end{equation*}
equipped with the norm
\begin{equation*}
    \normx{u}_{H^{m,n}(U)}^2 = \sum_{|\al|\leq m} \normx{\pa_x^{\al}u}_{U}^2  + \sum_{0\leq k \leq n} \normx{\pa_t^k u}_{U}^2.
\end{equation*}
Under Assumption \ref{assump2}, it is shown in \cite{Lions} that the space $\hioQ$ can be characterized as the closure of $\ciQ$ w.r.t. the norm $\normx{\cdot}_{\hioQ}$, i.e.,
\begin{equation*}
    \hioQ = \overline{\ciQ}^{\normx{\cdot}_{\hioQ}}.
\end{equation*}
Similarly, define $\hsioQ$ as the closure of $\coiQ$ under $\normx{\cdot}_{\hioQ}$:
$\hsioQ = \overline{\coiQ}^{\normx{\cdot}_{\hioQ}}$,
and denote by $\hnegioQ$ the dual space of $\hsioQ$. 

To establish the weak formulation, we introduce additional function spaces, whose definitions and properties can be found in \cite{Lions}. 
For $u \in \hioQ$, the distributional time derivative $\pa_t u$ is defined as the linear functional 
\begin{equation*}
    \pd{\pa_t u, \phi} := -\int_Q u \pa_t \phi \rd x \rd t, \quad \forall \phi \in \coiQ,
\end{equation*}
where $\pd{\cdot,\cdot}$ denotes the duality pairing. 
Under Assumption \ref{assump2}, there exists a continuous linear trace operator $\ga_s: \hioQ \rightarrow L^2(\Si_s)$ and the space $\hsioQ$ can be characterized as 
\begin{equation}\label{Hs10}
    \hsioQ = \set{ u \in \hioQ: \ga_s u = 0 ~\text{on}~ \Si_s}.
\end{equation}
Here, $\ga_s u$ is also written as $u|_{\Si_s}$. 
Furthermore, we define the space
\begin{equation*}
    W = \set{ u \in \hioQ: \pa_t u \in \hnegioQ },
\end{equation*}
with the norm
\begin{equation*}
    \normx{u}_{W}^2 = \normx{u}_{\hioQ}^2 + \normx{\pa_t u}_{\hnegioQ}^2.
\end{equation*}

The weak solution of \eqref{P} is to find $u \in W$, $u|_{\Si_s} = g$, $u|_{\Si_0} = u_0$, such that
\begin{equation}\label{vf1}
    A(u,v) := \pd{ \pa_t u, v} + \bk{a \na_x u, \na_x v}_Q = \bk{f,v}_Q, \quad \forall v \in \hsioQ.
\end{equation}
The well-posedness of the variational formulation \eqref{vf1} follows by homogenizing the boundary conditions \cite{LionsV1} and applying \cite[Lemma 3.1]{Lions}.

\begin{THeorem}
    Suppose Assumption \ref{assump2} holds. Then for every $f \in L^2(Q)$, $g \in H^{\frac{1}{2}}(\Si_s)$, $u_0 \in H^1(\Om_0)$, there exists a unique solution $u \in W$ to problem \eqref{vf1}.
\end{THeorem}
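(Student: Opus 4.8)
The plan is to reduce the problem to the homogeneous-boundary case and then invoke the abstract existence theorem for parabolic equations in non-cylindrical domains, namely \cite[Lemma 3.1]{Lions}, as indicated in the text. First I would construct a lifting $\wu \in W$ such that $\wu|_{\Si_s} = g$ and $\wu|_{\Si_0} = u_0$. Since $g \in H^{\frac12}(\Si_s)$ and the trace operator $\ga_s : \hioQ \to L^2(\Si_s)$ is continuous and (under Assumption \ref{assump2}) surjective onto a suitable trace space, one obtains a function $G \in \hioQ$ with $\ga_s G = g$; a parallel argument handles the initial datum $u_0 \in H^1(\Om_0)$ via the trace onto $\Si_0$. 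The compatibility condition between $g$ and $u_0$ is exactly what is needed to splice these two liftings into a single $\wu \in W$ (in particular $\pa_t \wu \in \hnegioQ$). Writing $u = \wu + w$, problem \eqref{vf1} becomes: find $w \in W_s$ (equivalently $w \in \hsioQ$ with $\pa_t w \in \hnegioQ$) with $w|_{\Si_0} = 0$, i.e. $w \in W_{s,0}$, such that
\begin{equation*}
    A(w,v) = \pd{f,v} - A(\wu,v) =: \pd{\tilde f, v}, \quad \forall v \in \hsioQ.
\end{equation*}
One checks that $v \mapsto \pd{f,v} - A(\wu,v)$ defines an element $\tilde f \in \hnegioQ$, using $f \in \hnegioQ$, the boundedness of $a$, and $\pa_t \wu \in \hnegioQ$.

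Next I would verify the hypotheses of the abstract result. The key structural facts are the Gelfand-type triple built from $\hsioQ$ and its dual $\hnegioQ$, the characterization $W_s = \{u \in \hsioQ : \pa_t u \in \hnegioQ\}$ with its continuous trace onto the time slices $\Si_\tau$, and the coercivity/boundedness of the spatial form $(a\na_x\cdot,\na_x\cdot)_Q$ on $\hsioQ$: boundedness follows from $a \le a_{max}$, and the crucial coercivity on $\hsioQ$ — control of $\|\na_x v\|_Q$ and, via a Poincaré-type inequality valid because $v$ vanishes on $\Si_s$, of $\|v\|_Q$ — follows from $a \ge a_{min}$. The term $\pd{\pa_t w, v}$ supplies, after integration by parts in time against $w$ itself, the $\frac12\|w(\cdot,T)\|_{\Om_T}^2$ energy contribution together with the vanishing initial term $\frac12\|w(\cdot,0)\|_{\Om_0}^2 = 0$; this is where $w \in W_{s,0}$ is used. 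With these ingredients \cite[Lemma 3.1]{Lions} (an inf-sup / Lions projection argument, or equivalently a Galerkin plus a priori estimate argument) yields a unique $w \in W_{s,0}$ solving the homogenized problem, and hence a unique $u = \wu + w \in W$.

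Uniqueness at the level of \eqref{vf1} deserves a separate line: if $u_1, u_2$ are two solutions then $w = u_1 - u_2 \in W_{s,0}$ satisfies $A(w,v) = 0$ for all $v \in \hsioQ$; testing (in the limiting/density sense, using that $W_{s,0}$-functions are admissible test directions via the trace theory on $W_s$) with $w$ and using the energy identity gives $\tfrac12\|w(\cdot,T)\|_{\Om_T}^2 + a_{min}\|\na_x w\|_Q^2 \le 0$, whence $w \equiv 0$ by the Poincaré inequality on $\hsioQ$.

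The main obstacle I anticipate is not the abstract existence step — that is essentially quoted from \cite{Lions} — but the careful construction of the lifting $\wu$ with the correct regularity $\pa_t \wu \in \hnegioQ$ in the moving-domain setting, and the rigorous justification that a function vanishing on $\Si_s$ may legitimately be used as a test function in the energy identity (the integration-by-parts-in-time formula $\pd{\pa_t w, w} = \tfrac12\|w(\cdot,T)\|^2_{\Om_T} - \tfrac12\|w(\cdot,0)\|^2_{\Om_0}$ for $w \in W_s$). Both require Assumption \ref{assump2}: it is precisely what makes the trace operators $\ga_s$ and $\ga_{\Si_\tau}$ well-defined and bounded, and what permits the density/approximation arguments (via $\ciQ$ and $\Ws$) that turn the formal manipulations into rigorous ones. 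Once these technical points are settled, the well-posedness follows by combining the homogenization with \cite[Lemma 3.1]{Lions} as stated.
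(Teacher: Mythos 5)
Your route---lift the boundary/initial data to homogenize, then invoke \cite[Lemma 3.1]{Lions} for the homogeneous problem---is exactly the argument the paper gives, which it states in a single sentence citing \cite{LionsV1} for the homogenization and \cite{Lions} for existence and uniqueness. Your additional detail on the lifting, the energy identity, and uniqueness is consistent with that outline, so the proposal is correct and follows essentially the same approach.
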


\section{Unfitted space-time FEM}\label{Sec3}
In this section, we discretize the variational formulation \eqref{vf1} using an unfitted space-time FEM. We begin by introducing some necessary notations. Let $\wOm \subset \R^d$ be a polygonal background domain that contains the physical domain $\Om(t)$ for all $t \in I$, i.e., $\Om(t) \subset \wOm$, and define the background space-time domain as $\wQ = \wOm \times I$. 
Let $\Th$ be a family of conforming triangulations of $\wQ$, which are not required to align with the moving boundary $\Si_s$.
Each element $K \in \Th$ is considered as closed. Define the local mesh size $h_K = \text{diam}(K)$ and the global mesh size $h = \max_{K \in \Th} h_K$. 
We assume that $\Th$ is shape regular and quasi-uniform, in the sense that there exist two constants $\mu, \nu > 0$ such that each element $K \in \Th$ contains a ball of diameter $\rho_K$ with $h_K / \rho_K \leq \mu$ and
\begin{equation} \label{quasiUniform}
    h/h_K \leq \nu, \quad \forall K \in \Th.
\end{equation}
We introduce the so-called active mesh 
\begin{equation*}
    \Thact := \{ K \in \Th: K \cap Q \neq \emptyset \},   
\end{equation*}
and the corresponding active domain $\Qact := \bigcup_{K \in \Thact} K$. Similarly, the cut mesh is defined as
\begin{equation*}
    \Thcut := \{ K \in \Th: K \cap \Si_s \neq \emptyset \},  
\end{equation*}
and the interior mesh as $\Thint = \Thact \backslash \Thcut$.

We impose the following geometric assumption on the mesh regarding the connectivity between cut and interior elements. 
\begin{ASsumption}\label{assump3}
For every $K \in \Thcut$, there exists an element $K' \in \Thint$ and a path $\{K_1, K_2, \dots, K_N\} \subset \Thact$ such that $K_1 = K, K_N = K'$, and consecutive elements $K_i$ and $K_{i+1}$ share a common facet for $i = 1, \dots, N-1$. The path length $N$ is uniformly bounded by a constant independent of both the mesh size $h$ and the position of $\Si_s$ relative to $\Th$.
\end{ASsumption}
We note that this assumption is satisfied provided the mesh is sufficiently fine to properly resolve the geometry of the space-time domain $Q$.

Next, we introduce some notations related to mesh faces. Let $\Fh$ denote the set of all element faces in $\Th$, and define the following subsets:
\begin{align*}
    \Fint &:= \{ F \in \Fh : F \nsubseteq \pa \wQ \},\\
    \Fgp &:= \{ F = \pa K_1 \cap \pa K_2 \in \Fh : F \cap Q \neq \emptyset, 
    ~K_1 \in \Thcut ~\text{or}~ K_2 \in \Thcut \}.
\end{align*}
Each face $F \in \Fh$ is assigned a unit normal vector $n_F$. For a boundary face $F\subset\pa \wQ$, we take $n_F$ to be the unit outward normal to $\pa \wQ$. For an interior face $F \in \Fint$, the jump of a function $w$ defined on $\bigcup_{K \in \Th} K$ is given by
\begin{equation*}
    [w]|_F = w_F^+ - w_F^-,
\end{equation*}
where $w_F^{\pm} = \lim_{t \rightarrow 0^+} w(x \pm tn_F)$. Let $(n_x, n_t)$ denote the unit outward normal vector to the moving boundary $\Sigma_s$, where $n_x$ and $n_t$ represent the spatial and temporal components, respectively.

We denote by $(\cdot, \cdot)_{\omega}$ the $L^2$ inner product on any measurable subset $\omega \subset \mathbb{R}^{d+1}$, and by $\langle \cdot, \cdot \rangle_s$ the inner product on any measurable set $s \subset \mathbb{R}^{d+1}$ of codimension one.

For any subcollection $\mathcal{T} \subset \mathcal{T}_h$, any measurable set $U \subset \mathbb{R}^{d+1}$ and any norm $\norm{\cdot}$, we define
\[
(\cdot, \cdot)_{\mathcal{T} \cap U} = \sum_{K \in \mathcal{T}} (\cdot, \cdot)_{K \cap U}, \quad
\| \cdot \|_{\mathcal{T} \cap U}^2 = \sum_{K \in \mathcal{T}} \| \cdot \|_{K \cap U}^2.\]
Analogous definitions apply when the subscripts $\mathcal{T} \cap U$ and $K \cap U$ are substituted by $\mathcal{T}$ and $K$, respectively.

Let $\mathcal{L}_m$ denote the $m$-dimensional Lebesgue measure. For any measurable set $U \subset \mathbb{R}^{d+1}$, we write $|U| = \mathcal{L}_{d+1}(U)$; if $U$ is a $d$-dimensional manifold, we write $|U| = \mathcal{L}_d(U)$.

For each element $K \in \Th$, let $\Pl(K)$ denote the space of linear polynomials on $K$. The finite element space defined on the active mesh is  given by
\begin{equation*}
    V_h = \{ v_h \in H^1(\Qact): v_h|_K \in \Pl(K), ~\forall K \in \Th \}.
\end{equation*}
The space-time FEM then reads: find $u_h \in V_h$ such that for all $v_h \in V_h$ 
\begin{equation} \label{FEM}
    A_h(u_h,v_h) := a_h(u_h,v_h) + g_h(u_h,v_h) = l_h(v_h),
\end{equation}
where the bilinear form $a_h$ and the linear form $l_h$ are defined as
\begin{equation}\label{ah}
    \begin{aligned}
    a_h(u,v) :=& (\pa_t u, v)_Q + (a\na_x u,\na_x v)_Q - \pd{a\na_x u \cdot n_x, v}_{\Si_s} - \pd{a\na_x v \cdot n_x, u}_{\Si_s}  \\
    & + \pd{\ga h^{-1}u, v}_{\Si_s} + \pd{u,v}_{\Si_0} + \bk{\de h^2 ( \pa_t u-\na_x \cdot (a \na_x u)), \pa_t v}_{\Thact \cap Q},
    \end{aligned}
\end{equation}
\begin{equation*}
    l_h(v) := \bk{f, v}_Q - \pd{a\na_x v \cdot n_x, g}_{\Si_s} + \pd{\ga h^{-1}g, v}_{\Si_s} + \pd{u_0, v}_{\Si_0} + \bk{\de h^2 f, \pa_t v}_{\Thact \cap Q},
\end{equation*}
and the ghost penalty term $g_h$ is defined as
\begin{equation*}
    g_h(u,v) = \sum_{F \in \Fgp} \pd{\ga_1 h 
    [\pa_{n_F} u], [\pa_{n_F} v] }_F.
\end{equation*}
Here, $\ga, \ga_1$ and $\de$ are mesh-independent positive constants to be specified later. 

\begin{REmark}
   The technique employed above for the weak enforcement of Dirichlet boundary conditions is based on the symmetric Nitsche’s penalty method \cite{Nitsche77}. It is also widely used in the context of the interior penalty discontinuous Galerkin (IPDG) method \cite{arnold82}. 
\end{REmark}

\begin{REmark}
    The final term in the bilinear form \eqref{ah} originates from the SUPG scheme \cite{supg}, which is employed to stabilize the norm of the time derivative in the numerical solution. This approach can be interpreted as replacing the original test space $V_h$ with the enriched space:
\[
\left\{ v_h + \delta h^2 \, \partial_t v_h : v_h \in V_h \right\}.
\]
The idea of incorporating time-upwind test functions in space-time methods has also been adopted in \cite{Moore2016,Moore2018}. When the diffusion coefficient $a(x, t)$ is constant, the last term in \eqref{ah} simplifies to $\bk{\de h^2 \pa_t u_h, \pa_t v_h}$, which corresponds to the weak form of the operator $-\delta h^2 \, \partial_{tt}$. Thus, this technique effectively introduces numerical diffusion along the temporal direction.
\end{REmark}

\begin{REmark}
    The SUPG term is scaled with $h^2$ in the space-time mesh since we have assumed the shape regularity and quasi-uniformity of $\Th$.
    This mesh parameter can be localized to $h_K^2$ if $\Th$ is shape regular but not quasi-uniform. Analogous parameter selection is also valid for the terms $\pd{\ga h^{-1}u,v}_{\Si_s}$ and $g_h(u,v)$, where the mesh parameter $h$ can be replaced by $h_K$ and $h_F:= \text{diam}(F)$, respectively. Moreover, the penalty parameters $\ga$ and $\ga_1$ can also be localized.
\end{REmark}

\begin{REmark}
The ghost penalty stabilization was first introduced in \cite{BURMAN10} for the Poisson equation to improve the conditioning of the stiffness matrix resulting from small cut elements. The specific form employed in this work is referred to as the face-based stabilization \cite{BURMAN12,cutFEM}. Two other commonly used variants are the patch-based \cite{BURMAN10,EulerFEM} and element-based \cite{Preuss,EulerFEM} stabilizations. Alternative techniques for designing such stabilization terms have also been proposed, including the approach in \cite{burman22}.
\end{REmark}

\section{A priori error analysis}\label{Sec4}
This section presents an a priori error estimate for the proposed method. Throughout the remainder of the paper, we assume that Assumptions \ref{assump1}--\ref{assump3} hold, unless stated otherwise.
To this end, we define the energy space as $V = \set{ v \in H^1(Q): v|_{K \cap Q} \in H^{2,1}(K \cap Q), ~\forall K \in \Th }$.
To facilitate the error estimates, we introduce the following norms and semi-norms. For any $v \in V_h+V$, define 
\begin{align*}
    \norme{v}^2 &= \normx{\afrac \na_x v}_Q^2 + \normxss{\gafrac \hnegfrac v}^2 + \normxsz{v}^2 + \normxst{v}^2 + \normx{\defrac h \pa_t v}_Q^2,\\
    \normes{v}^2 &= \norme{v}^2 + \normxss{\ganegfrac \hfrac a\na_x v \cdot n_x}^2 + \normx{\denegfrac h^{-1}v}_Q^2 + \normx{\defrac h\na_x \cdot (a\na_x v)}_{\Thact \cap Q}^2,
\end{align*}
and for any $v \in V_h$, define 
\begin{equation*}
    \snormgh{v}^2 = g_h(v,v),\quad
    \normeh{v}^2 = \norme{v}^2 + \snormgh{v}^2,
\quad
    \normehs{v}^2 = \normes{v}^2 + \snormgh{v}^2.
\end{equation*}

\subsection{Some auxiliary results}

We require the following local trace inequality for cut elements, as stated in \cite[Lemma 1]{Ginfsup}.
\begin{LEmma} \label{cutlem}
    Assume that $\pa Q$ is Lipschitz. Then, for any $K \in \Thcut$ and for $h < h_0$, where $h_0$ is an arbitrary but fixed constant, the following estimates hold:
    \begin{align}
        \norm{v}_{K \cap \pa Q} &\lesssim  h_K^{-1/2} \norm{v}_K +  h_K^{1/2} \norm{\na v}_K , \quad \forall v \in H^1(K),\label{cutTr}\\
        \norm{v_h}_{K \cap \pa Q} &\leq C_{inv} h_K^{-1/2} \norm{v_h}_K, \quad \forall v_h \in \Pl(K),\label{cutTrpoly}
    \end{align}
    where $C_{inv}$ is a positive constant independent of both the mesh size and the position of the boundary $\Si_s$ relative to the mesh.
\end{LEmma}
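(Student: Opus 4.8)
The plan is to derive \eqref{cutTr} from a Gauss--Green argument on $Q\cap K$, organized so that the only global trace estimate needed is the \emph{standard} (uncut) scaled trace inequality on the shape-regular simplex $K$; the inverse-type bound \eqref{cutTrpoly} then follows at once by inserting a polynomial inverse inequality.

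The preparatory step is to fix a direction that is uniformly transversal to $\pa Q$ on every cut element. Since $\pa Q$ is compact and Lipschitz, it can be covered by finitely many open sets $U_1,\dots,U_M\subset\R^{d+1}$ in each of which, after a rigid rotation, $\pa Q$ is the graph of a Lipschitz function of slope at most $L_j$ and $Q$ lies on one side. Taking $h_0$ to be a Lebesgue number of this cover, any $K\in\Thcut$ (which meets $\pa Q$ and has diameter $h_K<h_0$) is contained in a single $U_j$. Let $\mathbf{e}_j$ be the unit normal to the graphing hyperplane of $U_j$, oriented towards the complement of $Q$; the slope bound then yields a constant $\beta>0$ depending only on $\max_j L_j$ --- in particular independent of how $\pa Q$ cuts $K$ --- such that $\mathbf{e}_j\cdot\mathbf{n}_Q\geq\beta$ a.e.\ on $\pa Q\cap K$, where $\mathbf{n}_Q$ is the outward unit normal of $Q$.

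Now $Q\cap K$ is a set of finite perimeter whose boundary splits, up to an $\mathcal{L}_d$--null set, into the cut part $\pa Q\cap K$ (outer normal $\mathbf{n}_Q$) and the part $Q\cap\pa K$ (outer normal $\mathbf{n}_K$). Applying Gauss--Green to the constant field $v^2\mathbf{e}_j$ gives
\[
\int_{\pa Q\cap K}v^2\,\mathbf{e}_j\cdot\mathbf{n}_Q=\int_{Q\cap K}2v\,\mathbf{e}_j\cdot\na v-\int_{Q\cap\pa K}v^2\,\mathbf{e}_j\cdot\mathbf{n}_K .
\]
Bounding the left-hand side from below by $\beta\norm{v}_{K\cap\pa Q}^2$, the first term on the right by $2\norm{v}_K\norm{\na v}_K\le h_K^{-1}\norm{v}_K^2+h_K\norm{\na v}_K^2$ (Young's inequality with weight $h_K$), and the last term by $\norm{v}_{\pa K}^2$ followed by the classical scaled trace inequality on the \emph{uncut} shape-regular simplex $K$, namely $\norm{v}_{\pa K}^2\lesssim h_K^{-1}\norm{v}_K^2+h_K\norm{\na v}_K^2$, we obtain \eqref{cutTr} after dividing by the fixed constant $\beta$. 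For \eqref{cutTrpoly}, apply \eqref{cutTr} to $v_h\in\Pl(K)$ and use the inverse inequality $\norm{\na v_h}_K\lesssim h_K^{-1}\norm{v_h}_K$, whose constant depends only on the shape-regularity of $K$ and the polynomial degree; this gives $\norm{v_h}_{K\cap\pa Q}\lesssim h_K^{-1/2}\norm{v_h}_K$, which defines $C_{inv,K}$ and makes its independence of $h$ and of the cut position manifest.

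The one real obstacle is that, a priori, a small cut could make the implied constant in \eqref{cutTr} degenerate; this is exactly what the transversality constant $\beta$ from the preparatory step rules out, since $\beta$ is governed solely by the global Lipschitz character of $\pa Q$, while the hypothesis $h<h_0$ is precisely what makes each cut element see $\pa Q$ as a single graph of controlled slope. The remaining ingredients --- the affine pull-back to a reference simplex that produces the uncut scaled trace and inverse inequalities, and the fact that $Q\cap K$ is a set of finite perimeter so that Gauss--Green applies --- are routine and I would not elaborate on them.
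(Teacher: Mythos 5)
Your argument is correct in substance, and it is essentially the proof behind the result the paper itself does not reprove but imports from \cite[Lemma 1]{Ginfsup}: cover the compact Lipschitz boundary $\pa Q$ by finitely many rotated graph charts, take $h_0$ small enough that each cut element sits in one chart, use the chart's vertical direction as a field uniformly transversal to $\pa Q$ (with $\mathbf{e}_j\cdot \mathbf{n}_Q\geq\beta>0$ depending only on the Lipschitz constants), apply Gauss--Green on $Q\cap K$, and absorb the volume term by Young's inequality with weight $h_K$ and the $\pa K$ term by the standard scaled trace inequality on the uncut shape-regular simplex; the polynomial bound \eqref{cutTrpoly} then follows from the inverse inequality, and the cut-independence of the constant is manifest. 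Two points you dismiss as routine deserve explicit care. First, Gauss--Green with the field $v^2\mathbf{e}_j$ for $v\in H^1(K)$ on the merely finite-perimeter set $Q\cap K$ is not immediate (traces of $W^{1,1}$ fields on finite-perimeter sets are not defined in general); the clean fix is to prove the inequality for $v\in C^1(\overline K)$ and extend by density of $C^1(\overline K)$ in $H^1(K)$, defining the trace on $K\cap\pa Q$ as the continuous extension. Second, your $h_0$ is the Lebesgue number of the chart cover, whereas the lemma (and the paper's remark that one may take $h_0=\operatorname{diam}(\widetilde Q)$) allows an arbitrary fixed $h_0$; for $h_K$ between the Lebesgue number and such a larger $h_0$ one needs a minor additional argument (e.g.\ a partition of unity subordinate to the charts, with cutoff gradients entering the constant), so the hidden constant then depends on $h_0$. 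With these two repairs the proof is complete and matches the cited source's approach.
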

In applications of these trace inequalities, we may take $h_0 = \operatorname{diam}(\widetilde{Q})$. We also note that variants of the above estimates have been established under different assumptions in \cite{hh02,wx10,Chen2021}.

The following lemma shows that ghost penalty $g_h$ extends both the $L^2$ norm and $H^1$ semi-norm from the physical domain $Q$ to the active domain $\Qact$, whose proof can be found in \cite{GURKAN2019466}.
\begin{LEmma}
    For any $v_h \in V_h$, it holds that
    \begin{align}
        \norm{v_h}_{\Qact}^2 &\leq C_{g,0} \bk{ \norm{v_h}_Q^2 + \snormgh{v_h}^2 },\label{ghL2}\\
        \norm{\na_x v_h}_{\Qact}^2 &\leq C_g  \bk{ \norm{\na_x v_h}_Q^2 + \snormgh{v_h}^2 },\label{ghH1x}\\
        \norm{\pa_t v_h}_{\Qact}^2 &  \leq \widetilde{C}_g  \bk{ \norm{\pa_t v_h}_Q^2 + \snormgh{v_h}^2 } ,\label{ghH1t}
    \end{align}
    where $C_{g,0}, ~C_g$ and $\widetilde{C}_g$  are positive constants that are independent of
    the mesh size and the position of the boundary $\Si_s$ relative to the mesh.
\end{LEmma}

\subsection{Stability analysis of the method}
With the above preliminaries, we now establish the continuity and coercivity of the bilinear form $A_h$ in the following two theorems. 
\begin{THeorem}\label{continuous}
    It holds that
    \begin{align}     
        a_h(u,v_h) &\lesssim \normes{u} \normeh{v_h}, \quad \forall\, u \in V_h+V, ~ v_h \in V_h,\label{ahct}\\
        A_h(u_h,v_h) &\lesssim \normehs{u_h} \normeh{v_h}, \quad \forall u_h \in V_h, ~ v_h \in V_h. \label{Ahct}
    \end{align}
\end{THeorem}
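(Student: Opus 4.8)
The plan is to prove the two continuity estimates \eqref{ahct} and \eqref{Ahct} by bounding each term of $a_h(u,v_h)$ (and of $g_h$ in the second case) separately, using Cauchy--Schwarz in the appropriate $L^2$ inner product together with the definitions of the two norms $\normes{\cdot}$ and $\normeh{\cdot}$. I would first recall the explicit form \eqref{ah} of $a_h$ and organize the seven terms into groups: (i) the principal diffusion and time-derivative volume terms, (ii) the symmetric Nitsche boundary terms on $\Si_s$, (iii) the Nitsche penalty term on $\Si_s$ and the initial term on $\Si_0$, and (iv) the SUPG stabilization term over $\Thact \cap Q$.

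For group (i), the term $(a\na_x u,\na_x v_h)_Q$ is handled by writing $a = a^{1/2}\cdot a^{1/2}$ and applying Cauchy--Schwarz to get $\normx{\afrac\na_x u}_Q \normx{\afrac\na_x v_h}_Q$, both of which are controlled by the respective energy norms. The term $(u_t,v_h)_Q$ is the delicate one: it does not appear directly in $\normes{u}$, so it must be absorbed — I would write $(u_t, v_h)_Q = (\defrac h u_t, \denegfrac h^{-1} v_h)_Q$ and bound it by $\normx{\defrac h u_t}_Q \normx{\denegfrac h^{-1} v_h}_Q \le \normes{u}\normeh{v_h}$, noting that $\normx{\defrac h u_t}_Q$ is part of $\norme{u} \le \normes{u}$ and $\normx{\denegfrac h^{-1}v_h}_Q$ is part of $\normes{v_h}$... but since $v_h \in V_h$ we actually need it in $\normeh{v_h}$; here I would instead use an inverse inequality on $V_h$ to bound $\normx{h^{-1}v_h}_Q$ by $\normx{\na v_h}_{\Qact}$ and then by \eqref{ghH1x} and \eqref{ghL2} control it in terms of $\norme{v_h}$ and $\snormgh{v_h}$, i.e. in $\normeh{v_h}$. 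For group (ii), the terms $\pd{a\na_x u\cdot n_x, v_h}_{\Si_s}$ and $\pd{a\na_x v_h\cdot n_x, u}_{\Si_s}$ are split as $\langle \ganegfrac\hfrac a\na_x u\cdot n_x, \gafrac\hnegfrac v_h\rangle_{\Si_s}$ etc., so that one factor is controlled by $\normxss{\ganegfrac\hfrac a\na_x u\cdot n_x}$ (in $\normes{u}$) and the other by $\normxss{\gafrac\hnegfrac v_h}$ (in $\norme{v_h}$); the symmetric term needs the cut-element trace inequality \eqref{cutTrpoly} applied to $a\na_x v_h\cdot n_x$ — which is a constant vector field times a piecewise linear function, hence again covered after noting $\na_x v_h$ is piecewise constant — to reduce it to a bulk gradient norm, then \eqref{ghH1x}. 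Group (iii) is direct Cauchy--Schwarz with the $\normxss{\gafrac\hnegfrac\cdot}$, $\normxsz{\cdot}$ factors matching up. Group (iv) expands $\delta h^2(u_t - \na_x\cdot(a\na_x u))$ and pairs it with $v_{h,t}$: one applies Cauchy--Schwarz to get $\big(\normx{\defrac h u_t}_{\Thact\cap Q} + \normx{\defrac h\na_x\cdot(a\na_x u)}_{\Thact\cap Q}\big)\normx{\defrac h v_{h,t}}_{\Thact\cap Q}$; the first two are in $\normes{u}$ (note $\normx{\defrac h u_t}_Q \le \norme{u}$ and the second-order term is explicitly in $\normes{u}$) and the last, after extending from $Q$ to $\Thact\cap Q$ — harmless since $Q\subset\Qact$ and we integrate over less, actually we need the reverse, so here I would simply note that $\normx{\defrac h v_{h,t}}_{\Thact\cap Q}$ is bounded via an inverse estimate $\normx{h v_{h,t}}_{\Qact} \lesssim \normx{v_h}_{\Qact}$ (piecewise linear) and then \eqref{ghL2}, landing in $\normeh{v_h}$ again, or more simply absorb it into $\normx{\defrac h v_{h,t}}_Q + (\text{ghost terms})$ using \eqref{ghL2}-type extension of the time-derivative. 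Summing all groups gives \eqref{ahct}.

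For \eqref{Ahct} I would additionally handle $g_h(u_h,v_h)$: since $g_h$ is itself a sum of $L^2$ inner products over the faces in $\Fgp$, Cauchy--Schwarz gives $g_h(u_h,v_h) \le \snormgh{u_h}\snormgh{v_h} \le \normehs{u_h}\normeh{v_h}$ directly from the definitions $\snormgh{v}^2 = g_h(v,v)$, $\normehs{u_h}^2 = \normes{u_h}^2 + \snormgh{u_h}^2$ and $\normeh{v_h}^2 = \norme{v_h}^2 + \snormgh{v_h}^2$. Combining this with \eqref{ahct} applied to $u_h\in V_h\subset V_h+V$ yields $A_h(u_h,v_h) = a_h(u_h,v_h) + g_h(u_h,v_h) \lesssim \normes{u_h}\normeh{v_h} + \snormgh{u_h}\snormgh{v_h} \lesssim \normehs{u_h}\normeh{v_h}$.

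I expect the main obstacle to be the careful bookkeeping around the time-derivative volume term $(u_t,v_h)_Q$ and the SUPG time-derivative test factor: these are the places where the norm on the trial side ($\normes{u}$, which contains $\normx{\denegfrac h^{-1}u}_Q$) and the test side ($\normeh{v_h}$, which does not) are asymmetric, so one must exploit that $v_h$ lives in the discrete space $V_h$ — invoking inverse inequalities on individual elements and the ghost-penalty extension lemmas \eqref{ghL2}--\eqref{ghH1x} to pass from $Q$ to $\Qact$ without losing robustness in the cut position. All the other terms are routine Cauchy--Schwarz once the weighting of powers of $h$, $\ga$, $\de$, $a$ is matched between the two factors.
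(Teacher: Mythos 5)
Your treatment of every term except the time-derivative volume term coincides with the paper's, but the handling of $\bk{u_t,v_h}_Q$ contains a genuine gap. You pair it as $\bk{\defrac h\,u_t,\denegfrac h^{-1}v_h}_Q$ and then propose to control $\normx{\denegfrac h^{-1}v_h}_Q$ through an ``inverse inequality'' $\normx{h^{-1}v_h}_Q\lesssim\normx{\na v_h}_{\Qact}$ plus the ghost-penalty extension lemmas. No such inequality exists: inverse estimates bound derivatives by $h^{-1}$ times function values, not the reverse, and no Poincar\'e-type argument can supply the extra factor $h^{-1}$. A concrete counterexample is $v_h\equiv 1$: then $\na_x v_h=0$, $\pa_t v_h=0$, $\snormgh{v_h}=0$, so $\normeh{v_h}\sim \gafrac\hnegfrac\snorm{\Si_s}^{1/2}+\snorm{\Si_0}^{1/2}+\snorm{\Si_T}^{1/2}=O(h^{-1/2})$, while $\normx{\denegfrac h^{-1}v_h}_Q=O(h^{-1})$; hence $\normx{\denegfrac h^{-1}v_h}_Q\lesssim\normeh{v_h}$ fails uniformly in $h$, and \eqref{ahct} cannot be reached along this route. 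The asymmetry you noticed is by design: the $h^{-1}$-weighted $L^2$ norm appears only in $\normes{\cdot}$ and $\normehs{\cdot}$ because it is meant to land on the \emph{trial} function, never on the test function.

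The paper resolves exactly this point by integrating by parts in time before applying Cauchy--Schwarz:
$\bk{u_t,v_h}_Q=-\bk{u,\pa_t v_h}_Q-\pd{u,v_h}_{\Si_0}+\pd{u,v_h}_{\Si_T}+\pd{u\,n_t,v_h}_{\Si_s}$,
which is then bounded by
$\normx{\denegfrac h^{-1}u}_Q\normx{\defrac h\,\pa_t v_h}_Q+\normxsz{u}\normxsz{v_h}+\normxst{u}\normxst{v_h}+\normxss{\gafrac\hnegfrac u}\normxss{\gafrac\hnegfrac v_h}$;
now the $h^{-1}$ weight sits on $u$ (a term of $\normes{u}$) and the $h$-weighted time derivative sits on $v_h$ (a term of $\norme{v_h}$), so every factor matches the correct norm. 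Your remaining steps --- weighted Cauchy--Schwarz for the Nitsche boundary terms, with \eqref{cutTrpoly} and \eqref{ghH1x} for the term containing $a\na_x v_h\cdot n_x$, direct Cauchy--Schwarz for the penalty and initial terms, and $g_h(u_h,v_h)\leq\snormgh{u_h}\snormgh{v_h}$ for \eqref{Ahct} --- are the same as in the paper. Also note that the SUPG test factor needs neither an extension to $\Qact$ nor an inverse estimate: $(\cdot,\cdot)_{\Thact\cap Q}$ is simply the broken inner product over $Q$, and $\normx{\defrac h\,\pa_t v_h}_Q$ is already a constituent of $\norme{v_h}$.
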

\begin{proof}
 We estimate each term in \eqref{ah} to establish \eqref{ahct}. For the first term, applying integration by parts in time followed by the Cauchy–Schwarz inequality yields
    \begin{align*}
        \bk{\pa_t u, v_h}_Q & = -\bk{u, \pa_t v_h}_Q - \pd{u,v_h}_{\Si_0} + \pd{u,v}_{\Si_T} + \pd{u n_t, v_h}_{\Si_s} \\
        & \leq\ \normx{\denegfrac h^{-1} u}_Q \normx{\defrac h \pa_t v_h}_Q + \normxsz{u} \normxsz{v_h} + \normxst{u} \normxst{v_h} \\ 
        & \quad + \ga^{-1}h \normxss{\gafrac \hnegfrac u} \normxss{\gafrac \hnegfrac v_h}.
    \end{align*}
    The fourth term in \eqref{ah} can be bounded using the Cauchy-Schwarz inequality, the trace inequality \eqref{cutTrpoly} and estimate \eqref{ghH1x}
    \begin{align*}
        \pd{a \na_x v_h \cdot n_x, u}_{\Si_s} 
        & \leq \normxss{\ganegfrac \hfrac a \na_x v_h \cdot n_x} \normxss{\gafrac \hnegfrac u} \\ 
        & \lesssim \ganegfrac a_{max} \normx{\na_x v_h}_{\Qact} \normxss{\gafrac \hnegfrac u} \\
        & \lesssim \bk{\normx{\na_x v_h}_Q + \snormgh{v_h} } \normxss{\gafrac \hnegfrac u} \\
        & \lesssim \bk{\normx{\afrac \na_x v_h}_Q + \snormgh{v_h} } \normxss{\gafrac \hnegfrac u}.
    \end{align*}
    The remaining terms in \eqref{ah} are bounded directly via the Cauchy-Schwarz inequality. To establish \eqref{Ahct}, it remains to estimate $g_h(u_h,v_h)$, which again follows from the Cauchy-Schwarz inequality:
    \begin{equation*}
        g_h(u_h,v_h) \leq \snormgh{u_h} \snormgh{v_h}.
    \end{equation*}
    This completes the proof.
\end{proof}

\begin{THeorem}\label{coercive}
    There exist two positive constants $\de_0, \ga_0 $ such that whenever $0<\de \leq \de_0$ and $\ga \geq \ga_0$, we have
    \begin{equation}\label{Ahcoer}
        A_h(v_h,v_h) \geq \frac{1}{2} \normeh{v_h}^2, \quad \forall v_h \in V_h.
    \end{equation}
\end{THeorem}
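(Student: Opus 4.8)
The plan is to compute $A_h(v_h,v_h)$ term by term using the definitions \eqref{ah} and of $g_h$, and to show that the ``diagonal'' contributions reproduce $\normeh{v_h}^2$ while the remaining ``off-diagonal'' contributions can be absorbed by choosing $\de$ small and $\ga$ large. Writing out $a_h(v_h,v_h)$, the symmetric Nitsche terms combine to $-2\pd{a\na_x v_h\cdot n_x, v_h}_{\Si_s}$, the penalty term gives $\normxss{\gafrac\hnegfrac v_h}^2$, and the boundary term gives $\normxsz{v_h}^2$; adding $g_h(v_h,v_h)=\snormgh{v_h}^2$ supplies the ghost-penalty part of the norm. So the first step is the bookkeeping identity
\begin{align*}
A_h(v_h,v_h) &= (\pa_t v_h, v_h)_Q + \normx{\afrac\na_x v_h}_Q^2 - 2\pd{a\na_x v_h\cdot n_x, v_h}_{\Si_s} + \normxss{\gafrac\hnegfrac v_h}^2 \\
&\quad + \normxsz{v_h}^2 + \normx{\defrac h \pa_t v_h}_Q^2 - \bk{\de h^2 \na_x\cdot(a\na_x v_h), \pa_t v_h}_{\Thact\cap Q} + \snormgh{v_h}^2.
\end{align*}

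The second step handles the time-derivative term by integration by parts in $t$ (the same identity used in the proof of Theorem \ref{continuous}):
\[
(\pa_t v_h, v_h)_Q = \tfrac12\normxst{v_h}^2 - \tfrac12\normxsz{v_h}^2 + \tfrac12\pd{n_t\, v_h, v_h}_{\Si_s}.
\]
This produces the $\normxst{v_h}^2$ piece of the norm (with a factor $\tfrac12$), leaves a benign $+\tfrac12\normxsz{v_h}^2$, and generates a boundary term $\tfrac12\pd{n_t v_h,v_h}_{\Si_s}$. Since $(n_x,n_t)$ is a unit vector, $|n_t|\le 1$, so this term is bounded by $\tfrac12\normxss{v_h}^2_{\Si_s}\lesssim h\,\normxss{\hnegfrac v_h}^2$, which for $h$ small enough is absorbed into a fraction of the penalty term $\normxss{\gafrac\hnegfrac v_h}^2$ (this is where one needs $h<h_0$ and $\ga$ not too small, or alternatively one absorbs it together with the Nitsche cross term below).

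The third step is the standard Nitsche/SUPG absorption. For the Nitsche cross term, estimate $2\pd{a\na_x v_h\cdot n_x, v_h}_{\Si_s}\le 2\normxss{\ganegfrac\hfrac a\na_x v_h\cdot n_x}\normxss{\gafrac\hnegfrac v_h}$, then use the polynomial trace inequality \eqref{cutTrpoly} and the ghost-penalty extension \eqref{ghH1x} to get $\normxss{\ganegfrac\hfrac a\na_x v_h\cdot n_x}^2 \lesssim \ganegfrac a_{max}^2 C_{inv} C_g(\normx{\na_x v_h}_Q^2 + \snormgh{v_h}^2)$; by Young's inequality this splits into $\epsilon\ganegfrac(\normx{\afrac\na_x v_h}_Q^2+\snormgh{v_h}^2) + C\epsilon^{-1}\normxss{\gafrac\hnegfrac v_h}^2$, so for $\ga\ge\ga_0$ large the first part is a small fraction of the gradient and ghost terms and the second part a fraction of the penalty term. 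For the SUPG cross term, $\na_x\cdot(a\na_x v_h) = \na_x a\cdot\na_x v_h$ on each element since $v_h$ is piecewise linear; hence $\normx{\defrac h\na_x\cdot(a\na_x v_h)}_{\Thact\cap Q}\lesssim \defrac h\,\normx{\na_x a}_{L^\infty}\normx{\na_x v_h}_{\Qact}\lesssim \defrac(\normx{\afrac\na_x v_h}_Q + \snormgh{v_h})$ (using \eqref{ghH1x} and $h$ bounded), so $|\bk{\de h^2\na_x\cdot(a\na_x v_h),\pa_t v_h}_{\Thact\cap Q}|\le \tfrac12\normx{\defrac h\pa_t v_h}_Q^2 + C\de(\normx{\afrac\na_x v_h}_Q^2+\snormgh{v_h}^2)$. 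Collecting everything: choosing first $\ga_0$ large enough that the Nitsche absorption leaves at least, say, $\tfrac34$ of the gradient, penalty and ghost terms, and then $\de_0$ small enough that the SUPG absorption leaves at least $\tfrac12$ of those same terms (and $\tfrac12$ of $\normx{\defrac h\pa_t v_h}_Q^2$), together with the $\tfrac12$ coefficients already picked up on $\normxst{v_h}^2$ and kept on $\normxsz{v_h}^2$, yields $A_h(v_h,v_h)\ge\tfrac12\normeh{v_h}^2$.

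The main obstacle is the interplay between the two cross terms and the boundary term $\tfrac12\pd{n_t v_h,v_h}_{\Si_s}$: all three compete for the penalty term $\normxss{\gafrac\hnegfrac v_h}^2$ and the gradient/ghost terms, so the constants $\ga_0$ and $\de_0$ must be fixed in the right order (choose $\ga_0$ after seeing the $C_{inv},C_g,a_{max}$ dependence, then $\de_0$ after $\ga_0$). One must also be careful that the constant in $\pd{n_t v_h,v_h}_{\Si_s}\lesssim h\normxss{\hnegfrac v_h}^2$ is absolute (it comes from $|n_t|\le1$ and unpacking the weighted norm), so that it is genuinely higher order and absorbable; this uses $h<h_0$ from Lemma \ref{cutlem}.
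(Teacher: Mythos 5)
Your plan reproduces the paper's proof essentially step for step: integrate by parts in time to produce $\tfrac12\normxst{v_h}^2+\tfrac12\normxsz{v_h}^2$ plus the lateral term $\tfrac12\pd{v_h n_t,v_h}_{\Si_s}$, keep the gradient, penalty, ghost and SUPG terms as the good part, and absorb the three cross terms using the cut trace inequality \eqref{cutTrpoly}, the ghost-penalty extension \eqref{ghH1x} and Young's inequality, fixing $\ga$ large and $\de$ small. Two cosmetic corrections: the term $\tfrac12\pd{v_h n_t,v_h}_{\Si_s}$ needs neither $h$-smallness nor Lemma \ref{cutlem}, since $\normxss{v_h}^2=\ga^{-1}h\,\normxss{\gafrac\hnegfrac v_h}^2$ and $h\le\operatorname{diam}(\wQ)$, so it is absorbed purely by taking $\ga$ large (your parenthetical alternative, which is what the paper does); and the squared weighted Nitsche flux bound should carry a factor $\ga^{-1}$ rather than $\ga^{-1/2}$, which only strengthens the absorption argument.
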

\begin{proof}
    Without loss of generality, we assume $h < 1$. 
    Using integration by parts in time, we obtain
    \begin{equation*}
        \bk{\pa_t v_h, v_h}_Q = \frac{1}{2} \int_Q  \pa_t(v_h^2) \rd x \rd t = \frac{1}{2} \int_{\pa Q} v_h^2 n_t \rd s 
        = -\frac{1}{2} \normxsz{v_h}^2 + \frac{1}{2} \normxst{v_h}^2 + \frac{1}{2} \pd{v_h n_t, v_h}_{\Si_s}.
    \end{equation*}
 Substituting this into the definition of $A_h$, we derive
  \begin{equation}\label{threeterm}
    	\begin{split}
        A_h(v_h,v_h)
        & = \normx{\afrac \na_x v_h}_Q^2 + \snormgh{v_h}^2 + \normxss{\gafrac \hnegfrac v_h}^2 + \frac{1}{2}\normxsz{v_h}^2 + \frac{1}{2}\normxst{v_h}^2 + \normx{\defrac h \pa_t v_h}_Q^2  \\
        & \quad + \frac{1}{2} \pd{v_h n_t,v_h}_{\Si_s} - 2\pd{a\na_x v_h\cdot n_x, v_h}_{\Si_s} - \de h^2 \bk{\na_x a \cdot \na_x v_h, \pa_t v_h}_{\Thact \cap Q}. 
        \end{split}
    \end{equation}
    We now estimate the last three terms in \eqref{threeterm}.  For the first term, we have
    \begin{equation} \label{term1}
        \frac{1}{2} \pd{v_h n_t,v_h}_{\Si_s} \leq \frac{1}{2} \normxss{v_h}^2 = \frac{1}{2} \ga^{-1}h \normxss{\gafrac \hnegfrac v_h}^2 \leq \frac{1}{2} \ga^{-1} \normxss{\gafrac \hnegfrac v_h}^2.
    \end{equation}
    To bound the second term, we apply the Cauchy-Schwarz inequality, the $\varepsilon$-inequality $2ab \leq \varepsilon a^2 + \varepsilon^{-1}b^2$, the trace inequality \eqref{cutTrpoly}, the quasi-uniformity \eqref{quasiUniform} and the estimate~\eqref{ghH1x}
    \begin{equation} \label{term2}
    \begin{aligned}
        2\pd{a\na_x v_h\cdot n_x, v_h}_{\Si_s} 
        & \leq 2 \ganegfrac a_{max} \normxss{\hfrac \na_x v_h} \normxss{\gafrac \hnegfrac v_h} \\
        & \leq \varepsilon a_{max}^2 \normxss{\hfrac \na_x v_h}^2 + \varepsilon^{-1} \ga^{-1} \normxss{\gafrac \hnegfrac v_h}^2 \\
        & \leq C_1 \varepsilon \normx{\na_x v_h}_{\Qact}^2 + \varepsilon^{-1} \ga^{-1} \normxss{\gafrac \hnegfrac v_h}^2 \\
        & \leq C_1C_g \varepsilon \bk{ \normx{\na_x v_h}_Q^2 + \snormgh{v_h}^2 } + \varepsilon^{-1} \ga^{-1} \normxss{\gafrac \hnegfrac v_h}^2 \\
        & \leq C_2C_g \varepsilon \bk{ \normx{\afrac \na_x v_h}_Q^2 + \snormgh{v_h}^2 } + \varepsilon^{-1} \ga^{-1} \normxss{\gafrac \hnegfrac v_h}^2, 
    \end{aligned}
    \end{equation}
    where $C_1 = a_{max}^2 C_{inv}^2 \nu $ and $C_2 = C_1 \max \set{a_{min}^{-1}, 1}$. 
    The third term is bounded using the Cauchy-Schwarz inequality and Young's inequality
    \begin{equation} \label{term3}
    \begin{aligned}
        \de h^2 \bk{\na_x a \cdot \na_x v_h, \pa_t v_h}_{\Thact \cap Q} 
        & \leq \normx{\defrac h \na_x a \cdot \na_x v_h}_Q \normx{\defrac h \pa_t v_h}_Q \\
        & \leq \frac{1}{2} \de \normx{ h \na_x a \cdot \na_x v_h}_Q^2 + \frac{1}{2}  \normx{\defrac h \pa_t v_h}_Q^2 \\
        & \leq C_3 \de \normx{\na_x v_h}_{Q}^2 + \frac{1}{2}\normx{\defrac h \pa_t v_h}_Q^2  \\
        & \leq C_4 \de \normx{\afrac \na_x v_h}_{Q}^2 + \frac{1}{2}\normx{\defrac h \pa_t v_h}_Q^2, 
    \end{aligned}
    \end{equation}
    where $C_3 = \normx{\na_x a}_{L^{\infty}(Q)}^2/2$ and $C_4 = C_3 a_{min}^{-1}$. 
    Combining \eqref{threeterm}--\eqref{term3}, we obtain
    \begin{align*}
        A_h(v_h,v_h) &\geq \bk{1-C_2C_g\varepsilon - C_4 \de} \bk{\normx{\afrac \na_x v_h}_Q^2 + \snormgh{v_h}^2} + \bk{1-\varepsilon^{-1}\ga^{-1}-\frac{1}{2} \ga^{-1}} \normxss{\gafrac \hnegfrac v_h}^2 \\
        & \quad + \frac{1}{2} \bk{ \normx{\defrac h \pa_t v_h}_Q^2 + \normxsz{v_h}^2 + \normxst{v_h}^2 } \\
        & \geq \frac{1}{2} \normeh{v_h}^2,
    \end{align*}
    where the last inequality follows by choosing $\varepsilon = \bk{4C_2C_g}^{-1}$, $\de \leq \bk{4C_4}^{-1}$, $\ga \geq 8C_2C_g+1$.
\end{proof}
\begin{REmark}
    If $a(x,t)$ is a constant, then the last term in \eqref{threeterm} vanishes. Consequently, the condition $\de \leq \de_0$ can be omitted in Theorem \ref{coercive}.
\end{REmark}
By virtue of norm equivalence in finite-dimensional spaces, along with Theorem \ref{continuous}, Theorem \ref{coercive}, and the Lax-Milgram theorem, we deduce the existence and uniqueness of a solution $ u_h \in V_h $ to the finite element method \eqref{FEM}.

\subsection{Energy error estimates}
In this subsection, we present the derivation of the error estimate in the energy norm $\norme{\cdot}$ for the proposed method. Let $u \in V$ and $u_h \in V_h$ denote the solutions of problems \eqref{P} and \eqref{FEM}, respectively. One can readily show that
    \begin{equation}\label{GaOrth}
        a_h(u-u_h,v_h) = g_h(u_h,v_h), \quad \forall v_h \in V_h.
    \end{equation}

The following lemma presents an analogue of Céa's lemma for the proposed method.
\begin{LEmma}\label{cealem}
Under the assumptions of Theorem~\ref{coercive}, the solutions $u \in V$ of \eqref{P} and $u_h \in V_h$ of \eqref{FEM} satisfy
    \begin{equation}\label{cea}
        \norme{u-u_h} \lesssim \inf_{v_h \in V_h} \bk{ \normes{u-v_h} + \snormgh{v_h} }.
    \end{equation}
\end{LEmma}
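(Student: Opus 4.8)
The plan is to follow the classical strategy for Céa-type estimates in nonconforming/stabilized settings, relying on the coercivity (Theorem~\ref{coercive}), continuity (Theorem~\ref{continuous}), and the Galerkin orthogonality relation~\eqref{GaOrth}. First I would fix an arbitrary $v_h \in V_h$ and split the error as $u - u_h = (u - v_h) + (v_h - u_h)$, writing $\eta := u - v_h \in V_h + V$ and $e_h := v_h - u_h \in V_h$. Since $\norme{\cdot}$ satisfies the triangle inequality on $V_h + V$, it suffices to bound $\norme{e_h}$ by the right-hand side of~\eqref{cea}, as $\norme{\eta} \le \normes{\eta}$ trivially from the definitions of the two norms.

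Next I would apply coercivity to $e_h$: by Theorem~\ref{coercive}, $\tfrac12 \normeh{e_h}^2 \le A_h(e_h, e_h) = a_h(e_h, e_h) + g_h(e_h, e_h)$. Using $e_h = (v_h - u) + (u - u_h) = -\eta + (u - u_h)$ and the Galerkin orthogonality~\eqref{GaOrth}, which gives $a_h(u - u_h, e_h) = g_h(u_h, e_h)$, I would rewrite $a_h(e_h, e_h) = -a_h(\eta, e_h) + a_h(u - u_h, e_h) = -a_h(\eta, e_h) + g_h(u_h, e_h)$. Therefore
\begin{equation*}
    \tfrac12 \normeh{e_h}^2 \le -a_h(\eta, e_h) + g_h(u_h, e_h) + g_h(e_h, e_h).
\end{equation*}
Now I estimate the three terms on the right. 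For the first, continuity~\eqref{ahct} gives $-a_h(\eta, e_h) \lesssim \normes{\eta}\, \normeh{e_h}$. For the remaining two, I combine them as $g_h(u_h, e_h) + g_h(e_h, e_h) = g_h(u_h + e_h, e_h) = g_h(v_h, e_h) \le \snormgh{v_h}\, \snormgh{e_h} \le \snormgh{v_h}\, \normeh{e_h}$, using the Cauchy–Schwarz inequality for the symmetric positive semidefinite form $g_h$ and $\snormgh{e_h} \le \normeh{e_h}$. Combining, $\tfrac12 \normeh{e_h}^2 \lesssim \left( \normes{\eta} + \snormgh{v_h} \right) \normeh{e_h}$, hence $\normeh{e_h} \lesssim \normes{\eta} + \snormgh{v_h}$, and a fortiori $\norme{e_h} \le \normeh{e_h} \lesssim \normes{u - v_h} + \snormgh{v_h}$.

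Finally, the triangle inequality yields $\norme{u - u_h} \le \norme{\eta} + \norme{e_h} \le \normes{u - v_h} + \norme{e_h} \lesssim \normes{u - v_h} + \snormgh{v_h}$, and taking the infimum over $v_h \in V_h$ gives~\eqref{cea}. I do not anticipate a genuine obstacle here; the main subtlety worth stating carefully is that continuity~\eqref{ahct} is applied with the first argument $\eta = u - v_h$ in $V_h + V$ and the second in $V_h$, which is exactly the form in which Theorem~\ref{continuous} is stated, and that the bookkeeping with $g_h$ must group the ghost-penalty contributions so that the unknown term $g_h(u_h, \cdot)$ is absorbed into $g_h(v_h, \cdot)$ rather than estimated directly (one cannot control $\snormgh{u_h}$ a priori). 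Care is also needed that $\normeh{\cdot}$ rather than $\norme{\cdot}$ appears on the left after coercivity, which is harmless since we only claim the weaker $\norme{\cdot}$ bound in~\eqref{cea}.
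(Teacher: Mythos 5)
Your proof is correct and is precisely the standard argument the paper invokes (coercivity of $A_h$, the weak Galerkin orthogonality \eqref{GaOrth}, and continuity of $a_h$), with the details the paper omits filled in correctly — including the careful grouping $g_h(u_h,e_h)+g_h(e_h,e_h)=g_h(v_h,e_h)$ that avoids any bound on $\snormgh{u_h}$. No gaps; this matches the paper's intended proof.
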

The proof of this lemma follows readily from a standard argument, employing the coercivity of $A_h$ \eqref{Ahcoer}, the weak Galerkin orthogonality \eqref{GaOrth}, and the continuity of $a_h$ \eqref{ahct}. We omit the details here.

To establish the estimate in \eqref{cea}, we need several approximation properties. We first recall the following version of Stein's extension theorem for Lipschitz domains, which will play a key role in our subsequent analysis \cite{Stein}.
\begin{THeorem}\label{extthm}
    There exists a bounded linear operator $E: H^2(Q) \rightarrow H^2(\R^{d+1})$ such that for any $u \in H^2(Q)$, $Eu|_{Q} = u$ and 
    \begin{equation}\label{extineq}
        \norm{Eu}_{2,\R^{d+1}} \leq C_{ext} \norm{u}_{2,Q},
    \end{equation}
    where the constant $C_{ext}$ is dependent of the dimension $d$ and the Lipschitz constant of $Q$.
\end{THeorem}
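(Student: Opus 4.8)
The statement is the classical extension theorem of Stein, so the natural plan is to follow his construction, the key preliminary observation being that $Q=\Om(t)\times I$ is a bounded domain in $\mathbb{R}^{d+1}$ with Lipschitz boundary (a ``minimally smooth'' domain in Stein's sense), hence the theorem applies with $m=2$. Concretely, since $\pa Q$ is compact and Lipschitz, I would first cover it by finitely many open sets $U_1,\dots,U_M$ such that, after a rigid motion, $Q\cap U_j=\{x_{d+1}>\varphi_j(x')\}\cap U_j$ for a Lipschitz function $\varphi_j$ with Lipschitz constant $L_j$; adding an open set $U_0\Subset Q$ so that $\{U_j\}_{j=0}^M$ covers $\overline{Q}$ and fixing a smooth partition of unity $\{\chi_j\}$ subordinate to it, it suffices to construct, for each $j\ge 1$, a bounded extension operator for $H^2$ functions supported in $U_j$ from the hypograph side to all of $\mathbb{R}^{d+1}$, take $E_0$ to be extension by zero, and set $Eu=\sum_{j=0}^M E_j(\chi_j u)$. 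Both $Eu|_Q=u$ and the norm bound pass through the finite sum, the partition of unity producing only terms controlled by $\|u\|_{2,Q}$.

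For one special Lipschitz domain $\mathcal{U}=\{(x',x_{d+1}):x_{d+1}>\varphi(x')\}$, the central object is Stein's \emph{regularized distance} $\Delta^*$: a function on $\mathbb{R}^{d+1}\setminus\overline{\mathcal{U}}$ comparable to $\mathrm{dist}(x,\pa\mathcal{U})$, smooth away from $\pa\mathcal{U}$, with $|\pa^{\alpha}\Delta^*(x)|\lesssim \mathrm{dist}(x,\pa\mathcal{U})^{1-|\alpha|}$ for every multi-index $\alpha$ and implied constants depending only on $d$ and $L$. One then sets $E_{\mathcal U}u=u$ on $\mathcal{U}$ and, for $x_{d+1}<\varphi(x')$,
\begin{equation*}
    (E_{\mathcal U}u)(x) = \int_1^\infty u\bigl(x',\, x_{d+1}+\lambda\,\Delta^*(x)\bigr)\,\psi(\lambda)\,\rd\lambda,
\end{equation*}
where $\psi$ is a fixed continuous function on $[1,\infty)$ with rapid decay $\psi(\lambda)=O(\lambda^{-N})$ for every $N$, normalized by $\int_1^\infty\psi(\lambda)\,\rd\lambda=1$ and $\int_1^\infty\lambda^k\psi(\lambda)\,\rd\lambda=0$ for $k\ge 1$. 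Since $\lambda\ge 1$ and $\Delta^*\gtrsim\mathrm{dist}$, a suitable normalization of $\Delta^*$ ensures the reflected point $(x',x_{d+1}+\lambda\Delta^*(x))$ lies in $\mathcal{U}$, so the integrand makes sense.

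Two things then have to be verified. First, $E_{\mathcal U}u\in H^2(\mathbb{R}^{d+1})$: as $x$ approaches $\pa\mathcal{U}$ from outside, $\Delta^*\to 0$, and using $\int\psi=1$ the trace of $E_{\mathcal U}u$ matches that of $u$, while using $\int\lambda\psi=0$ the first-order (in particular normal) derivatives also match, so the inside and outside pieces glue with $H^2$ regularity across the Lipschitz interface. Second, the bound $\|E_{\mathcal U}u\|_{2,\mathbb{R}^{d+1}}\lesssim\|u\|_{2,\mathcal U}$: differentiating under the integral sign, every derivative of $E_{\mathcal U}u$ of order $\le 2$ is a finite sum of terms of the form $\lambda^{j}\,(\pa^{\beta_1}\Delta^*)\cdots(\pa^{\beta_r}\Delta^*)(x)\,(\pa^{\gamma}u)\bigl(x',x_{d+1}+\lambda\Delta^*(x)\bigr)$ with $|\gamma|\le 2$ and $j,\sum_i|\beta_i|\le 2$; inserting $|\pa^{\beta_i}\Delta^*|\lesssim\mathrm{dist}^{1-|\beta_i|}$, substituting $s=\lambda\Delta^*(x)$, using the rapid decay and moment cancellation of $\psi$, and applying Minkowski's integral inequality together with a Hardy-type inequality to absorb the singular weights $\mathrm{dist}^{-1}$ and $\mathrm{dist}^{-2}$, one gets $\|\pa^{\alpha}(E_{\mathcal U}u)\|_{L^2(\mathbb{R}^{d+1}\setminus\overline{\mathcal U})}\lesssim\|u\|_{2,\mathcal U}$ for $|\alpha|\le 2$, with constant depending only on $d$ and $L$.

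The main obstacle is this last estimate near the rough part of $\pa Q$: because the boundary is merely Lipschitz it cannot be straightened by a $C^2$ change of variables, so the singular weights $\mathrm{dist}^{-1},\mathrm{dist}^{-2}$ generated by the second derivatives of $\Delta^*$ must be controlled, and it is precisely the moment cancellation of $\psi$ combined with the sharp derivative bounds of the regularized distance that renders the integrals convergent; this is also where the dependence of $C_{ext}$ on $d$ and on the Lipschitz character of $Q$ enters. Since all of these steps are carried out in full in \cite{Stein}, within the present paper it is enough to observe that $Q$ is a bounded Lipschitz domain — part of our standing hypothesis on $\pa Q$ — and to invoke that construction with $m=2$.
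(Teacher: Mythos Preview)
Your proposal is correct and matches the paper's treatment: the paper does not prove this theorem but simply states it and cites Stein's book, and your sketch is precisely Stein's construction (cover by special Lipschitz graphs, partition of unity, the averaged reflection via the regularized distance and the oscillatory weight $\psi$ with moment conditions), together with the correct observation that in the present setting it suffices to note that $Q$ is a bounded Lipschitz domain and invoke the result with $m=2$. One minor slip: you write $Q=\Om(t)\times I$, but $Q$ is not a product domain since $\Om$ moves in time; this is harmless for the argument, as you only use that $\pa Q$ is Lipschitz, which is part of the standing assumptions.
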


Let $\pi_h: H^2(\Qact) \rightarrow V_h$ be the Scott-Zhang interpolation operator, which admits the following global approximation estimate \cite{ScottZhang}:
\begin{equation}\label{sz}
    \normx{v-\pi_h v}_{m,\Thact} \lesssim h^{2-m} \snorm{v}_{2,\Qact}, \quad \forall 0 \leq m \leq 2, \quad \forall v \in H^2(\Qact).
\end{equation}
Define the extension-interpolation operator $\pi_h^e : H^2(Q) \rightarrow V_h$ by $\pi_h^e = \pi_h \circ E$. Using the approximation properties of $\pi_h$ \eqref{sz} and the stability of the extension operator $E$ \eqref{extineq}, we obtain 
\begin{equation*}
    \begin{aligned}
    \normx{v-\pi_h^e v}_{m,\Thact \cap Q} 
    &= \normx{E v-\pi_h E v}_{m,\Thact \cap Q} 
    \leq \normx{E v-\pi_h E v}_{m,\Thact} \\ 
    &\lesssim h^{2-m} \normx{E v}_{2,\Qact}
    \lesssim h^{2-m} \normx{v}_{2,Q}.
    \end{aligned}
\end{equation*}
Consequently, the operator $\pi_h^e$ satisfies the global error estimate:
\begin{equation}\label{piheAppro}
    \normx{v-\pi_h^e v}_{m,\Thact \cap Q} \lesssim h^{2-m} \normx{v}_{2,Q}, \quad \forall 0 \leq m \leq 2, \quad \forall v \in H^2(Q).
\end{equation}
We also require the following approximation property of the ghost penalty term $g_h$ \cite{GURKAN2019466}:
\begin{equation}\label{ghAppro}
    \snormgh{\pi_h^e v} \lesssim h \normx{v}_{2,Q}, \quad \forall v \in H^2(Q).
\end{equation}

With these results, we now present the main energy error estimate in the theorem below.
\begin{THeorem} \label{Eerr}
Let $u \in H^2(Q)$ and $u_h \in V_h$ be the solutions of \eqref{P} and \eqref{FEM}, respectively. Under the conditions of Theorem~\ref{coercive}, the following energy error estimate holds:
    \begin{equation} \label{eerr}
        \norme{u-u_h} \lesssim h \normx{u}_{2,Q}.
    \end{equation}
\end{THeorem}
\begin{proof}
Employing Theorem~\ref{extthm}, we extend $u \in H^2(Q)$ to a function in $H^{2}(\R^{d+1})$, again denoted by $u$. Let $\rho = u - \pi_h^e u$. In accordance with the definition of the energy norm, we have
    \begin{equation} \label{III}
    \begin{aligned} 
        \normes{\rho}^2 &= \bk{ \normx{\afrac \na_x \rho}_Q^2 + \normx{\defrac h \rho_t}_Q^2 + \normx{\denegfrac h^{-1} \rho}_Q^2 } + \normx{\defrac h \na_x \cdot (a \na_x \rho)}_{\Thact \cap Q}^2 \\
        &\quad + \bk{ \normxss{\gafrac \hnegfrac \rho}^2 + \normxsz{\rho}^2 + \normxst{\rho}^2 + \normxss{\ganegfrac \hfrac a \na_x \rho \cdot n_x}^2 } \\
        &=: \II_1 + \II_2 + \II_3 .
    \end{aligned}
    \end{equation}
We now estimate the terms $\II_1, \II_2$, and $\II_3$ defined in the previous equation. For $\II_1$, we apply the approximation properties in \eqref{piheAppro} to obtain
    \begin{equation} \label{I1}
    \begin{aligned} 
        \II_1 &\leq a_{max} \normx{\na_x \rho}_Q^2 + \de h^2 \normx{\rho_t}_Q^2 + \de^{-1} h^{-2} \norm{\rho}_Q^2 \\
        &\lesssim h^{2} \norm{u}_{2,Q}^2 + h^{4} \norm{u}_{2,Q}^2 + h^{2} \norm{u}_{2,Q}^2 
        \lesssim h^{2} \norm{u}_{2,Q}^2 .
    \end{aligned}
    \end{equation}
Similarly, the estimate for $\II_2$ follows from \eqref{piheAppro}:
    \begin{equation} \label{I2}
    \begin{aligned}
        \II_2 &\ \lesssim\ \de \normx{h \na_x a \cdot \na_x \rho}_{\Thact \cap Q}^2 + \de \normx{h a \na_x \cdot \na_x \rho}_{\Thact \cap Q}^2 \\
        &\lesssim \de h^2 \normx{\na_x a}_{L^{\infty}(Q)}^2 \normx{\na_x \rho}_Q^2 + \de h^2 a_{max}^2   \norm{\rho}_{2,\Thact \cap Q}^2 \\
        &\lesssim h^{4} \norm{u}_{2,Q}^2 + h^{2} \norm{u}_{2,Q}^2
        \lesssim h^{2} \norm{u}_{2,Q}^2.
    \end{aligned}    
    \end{equation}
To bound $\II_3$, we use the trace inequality \eqref{cutTr}, the approximation properties in \eqref{piheAppro}, and the stability of the extension operator $E$ in \eqref{extineq}:
    \begin{equation} \label{I3}
    \begin{aligned}
        \II_3 &\lesssim  h^{-2}\normx{\rho}_{\Qact}^2 + \normx{\na \rho}_{\Qact}^2 + 
        h^{-1}\normx{\rho}_{\Qact}^2 + h \normx{\na \rho}_{\Qact}^2 \\
        & \quad + a_{max}^2 \bk{ \normx{\na \rho}_{\Qact}^2 + h^2 \normx{\rho}_{2,\Thact}^2 } \\
        & \lesssim h^{2} \norm{u}_{2,\Qact}^2 + h^{3} \norm{u}_{2,\Qact}^2 + h^{2} \norm{u}_{2,\Qact}^2 \\
        & \lesssim h^{2} \norm{u}_{2,\Qact}^2 \lesssim h^{2} \norm{u}_{2,Q}^2. 
    \end{aligned}    
    \end{equation}
Combining the estimates in \eqref{I1}, \eqref{I2}, and \eqref{I3} with the decomposition \eqref{III}, we conclude that
    \begin{equation}\label{rho}
        \normes{u-\pi_h^e u} = \normes{\rho} \lesssim h \norm{u}_{2,Q}.
    \end{equation}
Finally, the desired energy error estimate follows by applying Lemma~\ref{cealem} together with the bounds \eqref{rho} and \eqref{ghAppro}.
\end{proof}

\begin{REmark}
    The $H^2(Q)$ regularity is assumed for convenience in establishing optimal convergence. However, this regularity can be weakened to $H^{1+s}(Q)$ for any $\frac{1}{2} < s \leq 1$. To achieve this estimation, the last term of the definition of $\normes{v}^2$ is replaced by
    \begin{equation*}
        \sum_{K \in \Thact} \bk{ \normx{\defrac \hfrac a\na_x v \cdot n_{\pa K}}_{\pa K \cap Q}^2 +  \normx{\defrac \hfrac a\na_x v \cdot n_x}_{K \cap \pa Q}^2 },
    \end{equation*}
    where $n_{\pa K}$ denotes the unit outward normal vector of $\pa K$. 
    Then Theorem \ref{continuous} can be accordingly derived by applying integration by parts to the last term of $a_h$ and using \eqref{ghH1t}.
    Following the definition of the fractional Sobolev space \cite{fracS} and \cite{szfrac,ghLow}, the results from Lemma \ref{cutlem}, Theorem \ref{extthm} and \eqref{piheAppro} can be extended to fractional order. 
    Proceeding as in Theorem \ref{Eerr}, the energy error estimate can be generalized to
    \begin{equation*}
        \norme{u-u_h} \lesssim h^s \normx{u}_{1+s,Q},
    \end{equation*}
    provided that the exact solution satisfies $u \in H^{1+s}(Q), ~\frac{1}{2} < s \leq 1$.
\end{REmark}

\section{Condition number estimates}\label{Sec5}
This section estimates the condition number of the stiffness matrix and shows its independence of the way the boundary $\Si_s$ intersects the mesh. A key ingredient in our analysis is the following space-time Poincaré–Friedrichs inequality:
 \begin{equation*}
        \norm{u}_Q \leq C \bk{ \norm{\na_x u}_Q + \normxss{u} }, \quad \forall u \in \hioQ,
 \end{equation*}
where the constant $C$ depends only on $d,C_{\Om}$, and $C_{\pa \Om}$. 

\begin{LEmma} \label{lemst1D}
    Let $d=1$. Then for all $u\in \hioQ$, the following inequality holds:
    \begin{equation}\label{st1dineq}
        \norm{u}_Q \leq C \bk{ \norm{\na_x u}_Q + \normxss{u} },
    \end{equation}
    where the constant $C$ depends only on $C_{\Om}$. 
\end{LEmma}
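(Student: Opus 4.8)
The plan is to reduce \eqref{st1dineq} to the classical one–dimensional Poincaré–Friedrichs inequality on a single interval, apply it on each time slice, and then integrate in $t$; the length bound $\snorm{\Om(t)}\le C_{\Om}$ from Assumption~\ref{assump1} is what makes the resulting constant uniform. As a preliminary reduction, since $\ciQ$ is dense in $\hioQ$ and the trace operator $\ga_s:\hioQ\to L^2(\Si_s)$ is continuous under Assumption~\ref{assump2}, it suffices to prove \eqref{st1dineq} for $u\in\ciQ$ and then pass to the limit along a sequence $u_n\in\ciQ$ with $u_n\to u$ in $\hioQ$, using $\na_x u_n\to\na_x u$ in $L^2(Q)$ and $\ga_s u_n\to\ga_s u$ in $L^2(\Si_s)$.

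So fix $u\in\ciQ$ and $t\in I$. Since $\Om(t)\subset\R$ is open and bounded, it is a disjoint union of open intervals $(a_j(t),b_j(t))$ (countably many, and finitely many for a.e.\ $t$ because $\Si_s$ is piecewise $C^2$), with $a_j(t)\in\pa\Om(t)$ and lengths $\ell_j(t)=b_j(t)-a_j(t)$. Writing $u(x,t)=u(a_j(t),t)+\int_{a_j(t)}^{x}\pa_x u(s,t)\rd s$, squaring, and integrating over $(a_j(t),b_j(t))$ yields the elementary estimate
\begin{equation*}
\int_{a_j(t)}^{b_j(t)}|u(x,t)|^2\rd x\ \le\ 2\ell_j(t)\,|u(a_j(t),t)|^2+\ell_j(t)^2\!\int_{a_j(t)}^{b_j(t)}|\pa_x u(s,t)|^2\rd s .
\end{equation*}
Since $\sum_j\ell_j(t)=\snorm{\Om(t)}\le C_{\Om}$ (Assumption~\ref{assump1}), in particular $\ell_j(t)\le C_{\Om}$, so summing over $j$ and then integrating in $t$ over $I$ gives
\begin{equation*}
\norm{u}_Q^2\ \le\ 2C_{\Om}\int_I\sum_j|u(a_j(t),t)|^2\rd t+C_{\Om}^2\,\norm{\na_x u}_Q^2 .
\end{equation*}

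The remaining — and only genuinely delicate — step is to control $\int_I\sum_j|u(a_j(t),t)|^2\rd t$ by $\normxss{u}^2$, the difficulty being that the surface measure on $\Si_s$ carries a Jacobian factor relative to $\rd t$. I would make this rigorous by noting that the projection $\pi_t:\Si_s\to I$ onto the time axis is $1$-Lipschitz, so its coarea Jacobian is $\le 1$ and the coarea formula (cf.\ \cite{maggi2005}) gives, for any nonnegative $g$,
\begin{equation*}
\int_{\Si_s}g\,\rd s\ \ge\ \int_I\Big(\sum_{p\in\pa\Om(t)}g(p,t)\Big)\rd t ;
\end{equation*}
applying this with $g=|u|^2$ and discarding the right–endpoint contributions yields $\int_I\sum_j|u(a_j(t),t)|^2\rd t\le\normxss{u}^2$. (Equivalently, since $\Si_s$ is piecewise $C^2$ one may parametrize each boundary arc $t\mapsto(a_j(t),t)$ and use that its arclength element $\sqrt{1+|\dot a_j(t)|^2}\,\rd t$ dominates $\rd t$.) Combining with the previous display and taking square roots gives $\norm{u}_Q\le C\bk{\norm{\na_x u}_Q+\normxss{u}}$ with $C=\max\{\sqrt{2C_{\Om}},\,C_{\Om}\}$, which depends only on $C_{\Om}$.

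In summary, the elementary slicewise interval inequality (the content of the one–dimensional auxiliary result in \cite{PF1d}) plus Assumption~\ref{assump1} give the estimate up to the endpoint term, and the coarea/parametrization comparison between the fibre sums over $\pa\Om(t)$ and the surface integral $\normxss{u}^2$ — the step I expect to be the main obstacle — closes the argument; the passage from $\ciQ$ to $\hioQ$ is then routine density.
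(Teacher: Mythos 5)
Your proof is correct and follows essentially the same route as the paper: a slicewise one-dimensional Poincar\'e--Friedrichs bound on each component interval of $\Omega(t)$ (the paper cites \cite{PF1d} for this, you derive it directly from the fundamental theorem of calculus), summation over the finitely many components using $|\Omega(t)|\le C_{\Omega}$, integration in time with the surface measure on $\Sigma_s$ dominating $\rd t$ (a step the paper leaves implicit and you justify via the coarea/arclength comparison), and a concluding density argument. The only blemish is a harmless constant slip ($\ell_j(t)^2$ should be $2\ell_j(t)^2$ in the slicewise estimate), which does not affect the claimed dependence of $C$ on $C_{\Omega}$.
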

\begin{proof}
    Assume first that $\Om_t=(a(t),b(t))$ is an interval with $a(t) < b(t)$ for each fixed $t$. By \cite[Theorem 3.1]{PF1d} we have
    \begin{equation} \label{interval}
        \norm{w}_{\Om_t}^2 \leq 2|\Om_t|^2 \norm{\pa_x w}_{\Om_t}^2 + 2|\Om_t|(T_0w(a(t)))^2, \quad \forall w \in H^1(\Om_t),
    \end{equation}
    where $T_0: H^1(\Om_t) \rightarrow L^2(\pa \Om_t)$ denotes the trace operator.
    
    Now suppose $\Omega_t$ satisfies Assumption \ref{assump1} but is not necessarily an interval. Since $\Omega_t$ is a bounded open set in $\R$, it follows from \cite[Theorem 1]{openset} that $\Omega_t$ is a union of countably many disjoint open intervals. Moreover, by Assumption \ref{assump1}, we have $\snorm{\partial \Omega_t} \leq C_{\partial \Omega}$, which implies that $\Omega_t$ consists of only finitely many such intervals. Using the bound $\snorm{\Omega_t} \leq C_{\Omega}$ together with \eqref{interval}, we obtain
    \begin{equation} \label{aux1d}
    	\norm{w}_{\Omega_t}^2 \leq 2C_{\Omega}^2 \norm{\pa_x w}_{\Omega_t}^2 + 2C_{\Omega} \norm{w}_{\partial \Omega_t}^2, \quad \forall w \in H^1(\Omega_t).
    \end{equation}
    
    Now take any $u \in \ciQ$. Applying \eqref{aux1d} gives
    \begin{equation}\label{1d}
    	\norm{u}_{\Omega_t}^2 \leq 2C_{\Omega}^2 \norm{\na_x u}_{\Omega_t}^2 + 2C_{\Omega} \norm{u}_{\partial \Omega_t}^2, \quad \forall t \in [0,T].
    \end{equation}
    Integrating \eqref{1d} over $t \in [0,T]$ yields
    \begin{equation} \label{C1ineq}
    	\norm{u}_{Q}^2 \leq 2C_{\Omega}^2 \norm{\na_x u}_{Q}^2 + 2C_{\Omega} \normxss{u}^2.
    \end{equation}
    
    Under Assumption \ref{assump2}, the space $\ciQ$ is dense in $\hioQ$ and the trace operator $\gamma_s$ is continuous. Therefore, inequality \eqref{C1ineq} extends to all $u \in \hioQ$. Taking $C = \sqrt{2} \max\left\{C_{\Omega}, \sqrt{C_{\Omega}}\right\}$ completes the proof of \eqref{st1dineq}. \end{proof}

For $d \geq 2$, the following Sobolev inequality will be needed.
\begin{THeorem} \label{PFineq}
Let $\Omega \subset \R^d$ with $d \geq 2$ be a bounded Lipschitz domain satisfying $\snorm{\partial \Omega} < \infty$. Then there exists a constant $C(d)$, depending only on $d$, such that for all $u \in H^1(\Omega)$,
    \begin{equation}\label{st2dineq}
        \norm{u}_{\Om} \leq  C(d) \bk{ \snorm{\Om}^{\frac{1}{d}} \norm{\na u}_{\Om} + \snorm{\pa \Om}^{\frac{1}{2d-2}} \norm{u}_{\pa \Om} }.
    \end{equation}
\end{THeorem}
\begin{proof}
    By \cite[Theorem 1.2]{maggi2005}, for $d\geq 2$, and exponent $p \in \left[1,d\right)$, if a function $f:\Om \rightarrow \R$ satisfies $\norm{\na f}_{L^p(\Om)}, \norm{f}_{L^{p^{\sharp}}(\pa \Om)} < \infty$, then
    \begin{equation} \label{pspsharp}
        \norm{f}_{L^{p^*}(\Om)} \leq C_1(d,p) \norm{\na f}_{L^p(\Om)} + C_2(d,p) \norm{f}_{L^{p^{\sharp}}(\pa \Om)},
    \end{equation}
    where $p^* = \frac{d p}{d-p}$, $p^{\sharp} = \frac{(d-1)p}{d-p}$, and the constants $C_1(d,p), C_2(d,p)$ depend only on $d$ and $p$. 
  
   Now choose $p = \frac{2d}{d+2}$, which lies in $[1, 2)$. Then we have $p^* = 2$ and $p^{\sharp} = \frac{2(d-1)}{d} < 2$. Define $r = \frac{2}{p}$ and $s = \frac{2}{2 - p}$, so that $\frac{1}{r} + \frac{1}{s} = 1$. By H\"older's inequality, 
    \begin{equation*} 
        \norm{\na u}_{L^p(\Om)}^p = \int_{\Om} \snorm{\na u}^p \cdot 1 \rd x 
        \leq \bk{ \int_{\Om} \bk{\snorm{\na u}^p}^r \rd x}^{\frac{1}{r}} \bk{ \int_{\Om} 1^s \rd x}^{\frac{1}{s}} 
        = \snorm{\Om}^{1-\frac{p}{2}} \norm{\na u}_{L^2(\Om)}^p,
    \end{equation*}
which implies
    \begin{equation*}
        \norm{\na u}_{L^p(\Om)} \leq \snorm{\Om}^{\frac{1}{p}-\frac{1}{2}} \norm{\na u}_{\Om} = \snorm{\Om}^{\frac{1}{d}} \norm{\na u}_{\Om}.
    \end{equation*}
A similar argument gives
    \begin{equation*}
        \norm{u}_{L^{p^{\sharp}}(\pa\Om)} \leq \snorm{\pa \Om}^{\frac{1}{p^{\sharp}}-\frac{1}{2}} \norm{u}_{\pa \Om} = \snorm{\pa \Om}^{\frac{1}{2d-2}} \norm{u}_{\pa \Om}.
    \end{equation*}
Substituting these estimates into \eqref{pspsharp} completes the proof of \eqref{st2dineq}.
\end{proof}

\begin{REmark}
    Theorem \ref{PFineq} imposes only mild assumptions on the domain $\Om$, requiring merely that it be a bounded Lipschitz domain with boundary of finite Lebesgue measure. Moreover, the generic constant in \eqref{st2dineq} depends only on the dimension $d$ and is independent of whether $\Om$ is convex or possesses a rough boundary. This follows from the application of the auxiliary result in \cite[Theorem 1.2]{maggi2005}, where the Sobolev inequality \eqref{pspsharp} is shown to hold even for locally Lipschitz domains that are not necessarily bounded.
\end{REmark}

We now extend the space-time Poincaré–Friedrichs inequality to dimensions $d \geq 2$ using an argument analogous to that in Lemma~\ref{lemst1D}.
\begin{LEmma}\label{lemst2D}
Let $d \geq 2$. Then for all $u \in \hioQ$,
\begin{equation}\label{stineq}
	\norm{u}_Q \leq C \left( \norm{\na_x u}_Q + \normxss{u} \right),
\end{equation}
where the constant $C$ depends only on $d, C_{\Omega}$, and $C_{\partial \Omega}$.
\end{LEmma}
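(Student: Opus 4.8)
The plan is to mimic the proof of Lemma~\ref{lemst1D}, replacing the one-dimensional interval estimate \eqref{interval} by a slice-wise application of the Sobolev-type inequality \eqref{st2dineq}. First I would take $u \in \ciQ$ and fix $t \in [0,T]$. The time slice $u(\cdot,t)$ lies in $H^1(\Om_t)$, and $\Om_t$ is a bounded Lipschitz domain with $\snorm{\pa\Om_t} \leq C_{\pa\Om} < \infty$ by Assumption~\ref{assump1} and the standing regularity of $\pa\Om(t)$; hence \eqref{st2dineq} applies on $\Om_t$. Bounding $\snorm{\Om_t}^{1/d} \leq C_{\Om}^{1/d}$ and $\snorm{\pa\Om_t}^{1/(2d-2)} \leq C_{\pa\Om}^{1/(2d-2)}$ via Assumption~\ref{assump1}, then squaring and using $(a+b)^2 \leq 2a^2 + 2b^2$, I obtain
\begin{equation*}
    \norm{u(\cdot,t)}_{\Om_t}^2 \leq 2C(d)^2 \bk{ C_{\Om}^{2/d}\,\norm{\na_x u(\cdot,t)}_{\Om_t}^2 + C_{\pa\Om}^{1/(d-1)}\,\norm{u(\cdot,t)}_{\pa\Om_t}^2 }, \quad t \in [0,T].
\end{equation*}

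Next I would integrate this inequality in $t$ over $(0,T)$. By Fubini the first two terms become $\norm{u}_Q^2$ and $\norm{\na_x u}_Q^2$ respectively. For the boundary term I would use that the $d$-dimensional surface measure on $\Si_s$ dominates the product measure $\rd\si_{\pa\Om_t}\rd t$ on its time slices: parametrizing $\Si_s$ over the slices $\pa\Om_t$ and writing the space-time unit normal as $(n_x, n_t)$ with $\snorm{n_x}^2 + n_t^2 = 1$, the coarea formula gives $\rd\si_{\Si_s} = \snorm{n_x}^{-1}\rd\si_{\pa\Om_t}\rd t \geq \rd\si_{\pa\Om_t}\rd t$, so that $\int_0^T \norm{u(\cdot,t)}_{\pa\Om_t}^2\rd t \leq \normxss{u}^2$. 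This yields
\begin{equation*}
    \norm{u}_Q^2 \leq 2C(d)^2 \bk{ C_{\Om}^{2/d}\,\norm{\na_x u}_Q^2 + C_{\pa\Om}^{1/(d-1)}\,\normxss{u}^2 }, \quad \forall u \in \ciQ.
\end{equation*}

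Finally I would pass to the closure. Under Assumption~\ref{assump2}, $\ciQ$ is dense in $\hioQ$ for $\normx{\cdot}_{\hioQ}$, and $\ga_s : \hioQ \to L^2(\Si_s)$ is continuous; since $\norm{u}_Q$, $\norm{\na_x u}_Q$ and $\normxss{u} = \norm{\ga_s u}_{\Si_s}$ are each controlled by $\normx{\cdot}_{\hioQ}$, the above inequality extends to all $u \in \hioQ$. Setting $C = \sqrt{2}\,C(d)\max\{C_{\Om}^{1/d}, C_{\pa\Om}^{1/(2d-2)}\}$ gives \eqref{stineq} with $C$ depending only on $d$, $C_{\Om}$ and $C_{\pa\Om}$.

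The one point requiring care is the boundary term after integration: the slice-wise estimate naturally produces $\int_0^T \norm{u(\cdot,t)}_{\pa\Om_t}^2\rd t$, and one must verify this is bounded by $\normxss{u}^2$ and not the reverse. The inequality $\rd\si_{\pa\Om_t}\rd t \leq \rd\si_{\Si_s}$ — valid because $\snorm{n_x} \leq 1$ a.e., which uses only that $\Si_s$ is Lipschitz (piecewise $C^2$) — settles this, and is precisely where the boundary regularity enters. The remainder is a direct transcription of the one-dimensional argument in Lemma~\ref{lemst1D}.
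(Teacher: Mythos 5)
Your proposal is correct and follows exactly the route the paper intends: it states that Lemma~\ref{lemst2D} is proved ``using an argument analogous to that in Lemma~\ref{lemst1D}'', i.e.\ apply the slice-wise Sobolev inequality \eqref{st2dineq} on $\Om_t$, bound $\snorm{\Om_t}$ and $\snorm{\pa\Om_t}$ via Assumption~\ref{assump1}, integrate in $t$, and conclude by density of $\ciQ$ and continuity of $\ga_s$ under Assumption~\ref{assump2}. Your explicit verification that $\rd\si_{\pa\Om_t}\rd t \leq \rd\si_{\Si_s}$ (via $\snorm{n_x}\leq 1$) is a detail the paper leaves implicit even in the one-dimensional case, and it is handled correctly, with the inequality going in the needed direction.
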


For the condition number analysis, we require the $L^2$ stability of the ghost penalty $g_h$, which is stated as follows \cite{GURKAN2019466}:
\begin{equation}\label{ghstab}
    \snormgh{v_h} \lesssim h^{-1} \normx{v_h}_{\Qact}, \quad \forall v_h \in V_h.
\end{equation}

\begin{LEmma}
    For any $v_h \in V_h$, it holds that
    \begin{align} 
        \normx{v_h}_{\Qact} &\lesssim \normeh{v_h}, \label{l2toE}\\ 
        \normehs{v_h} &\lesssim h^{-1} \normx{v_h}_{\Qact}.\label{Etol2}
    \end{align}
\end{LEmma}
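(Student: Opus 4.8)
The plan is to treat the two estimates separately: \eqref{l2toE} rests on the space-time Poincaré–Friedrichs inequality established above, while \eqref{Etol2} is a collection of inverse-type bounds.

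For \eqref{l2toE} I would start from the ghost-penalty extension bound \eqref{ghL2}, which reduces the task to controlling $\normx{v_h}_Q$. Applying the space-time Poincaré–Friedrichs inequality (Lemma~\ref{lemst1D} when $d=1$, Lemma~\ref{lemst2D} when $d\ge 2$) gives $\normx{v_h}_Q \lesssim \normx{\na_x v_h}_Q + \normxss{v_h}$. The first summand is bounded by $a_{min}^{-1/2}\normx{\afrac\na_x v_h}_Q$, and since $h\le h_0=\operatorname{diam}(\wQ)$ the second is bounded by $(h_0/\ga)^{1/2}\normxss{\gafrac\hnegfrac v_h}$; both are $\lesssim\norme{v_h}$. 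Combining with $\snormgh{v_h}\le\normeh{v_h}$ inside \eqref{ghL2} then yields $\normx{v_h}_{\Qact}\lesssim\normeh{v_h}$.

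For \eqref{Etol2} I would bound each term of $\normehs{v_h}^2=\normes{v_h}^2+\snormgh{v_h}^2$ by $h^{-1}\normx{v_h}_{\Qact}$. The gradient-type terms — $\normx{\afrac\na_x v_h}_Q$, $\normx{\defrac h (v_h)_t}_Q$, and (using that $\na_x\cdot(a\na_x v_h)=\na_x a\cdot\na_x v_h$ elementwise for piecewise linear $v_h$) the term $\normx{\defrac h\na_x\cdot(a\na_x v_h)}_{\Thact\cap Q}$ — are handled by the standard inverse inequality $\normx{\na_x v_h}_{\Qact}\lesssim h^{-1}\normx{v_h}_{\Qact}$ on the quasi-uniform mesh, with the factors $h$, $\de$, $a_{max}$, $\norm{\na_x a}_{L^\infty(Q)}$ absorbed into the hidden constant (again using $h\le h_0$). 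The surface terms on $\Si_s$, namely $\normxss{\gafrac\hnegfrac v_h}$ and $\normxss{\ganegfrac\hfrac a\na_x v_h\cdot n_x}$, follow from the cut trace inequality \eqref{cutTrpoly} applied to $v_h$ and to the (elementwise constant, hence admissible) function $\na_x v_h$, combined with the inverse inequality; the terms $\normxsz{v_h}$, $\normxst{v_h}$ over $\Si_0,\Si_T\subset\pa\wQ$ follow from a standard scaled trace inequality on the corresponding uncut mesh facets together with the inverse inequality; the term $\normx{\denegfrac h^{-1}v_h}_Q$ is immediate; and $\snormgh{v_h}\lesssim h^{-1}\normx{v_h}_{\Qact}$ is exactly \eqref{ghstab}.

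I expect the only genuinely substantive ingredient to be the space-time Poincaré–Friedrichs inequality invoked in \eqref{l2toE}; once Lemmas~\ref{lemst1D}–\ref{lemst2D} are available, both estimates reduce to bookkeeping. The points that warrant care are minor: one must note that $\na_x v_h$ is elementwise constant so that the polynomial cut trace inequality \eqref{cutTrpoly} legitimately applies to it, and that $\Si_0$ and $\Si_T$ lie on $\pa\wQ$ so the relevant facets are uncut and a classical trace inequality suffices there.
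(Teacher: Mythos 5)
Your proposal is correct and follows essentially the same route as the paper: \eqref{l2toE} via the ghost-penalty extension \eqref{ghL2} combined with the space-time Poincaré–Friedrichs inequality, and \eqref{Etol2} by termwise inverse estimates, the cut trace inequality \eqref{cutTrpoly} (exploiting that $\na_x\cdot(a\na_x v_h)=\na_x a\cdot\na_x v_h$ for piecewise linears), and the ghost-penalty stability \eqref{ghstab}. The only cosmetic deviation is that the paper also handles the $\Si_0,\Si_T$ terms through the cut trace inequality on $\pa Q$ rather than via classical trace inequalities on boundary facets of $\pa\wQ$, which changes nothing substantive.
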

\begin{proof}
    From \eqref{ghL2} and \eqref{stineq}, it follows that
    \begin{align*}
        \normx{v_h}_{\Qact} &\lesssim \norm{v_h}_Q + \snormgh{v_h} 
        \lesssim \norm{\na_x v_h}_Q + \normxss{v_h} + \snormgh{v_h} \\
        &\lesssim \normx{\afrac \na_x v_h}_Q + \normxss{\gafrac \hnegfrac v_h} + \snormgh{v_h}
        \leq \normeh{v_h},
    \end{align*}
    which yields \eqref{l2toE}. To prove \eqref{Etol2}, we begin by observing its definition:
    \begin{equation} \label{ii}
    \begin{aligned}
        \normehs{v_h}^2 &= \bk{ \normx{\afrac \na_x v_h}_Q^2 + \normx{\defrac h \pa_t v_h}_Q^2 + \normx{\denegfrac h^{-1}v_h}_Q^2 + \normx{\defrac h\na_x a \cdot \na_x v_h}_{\Thact \cap Q}^2 } \\
        & \quad + \bk{ \normxss{\ganegfrac \hfrac a\na_x v_h \cdot n_x}^2 + \normxss{\gafrac \hnegfrac v_h}^2 + \normxsz{v_h}^2 + \normxst{v_h}^2 + \snormgh{v_h}^2 } \\
        & =: \RR_1 + \RR_2.
    \end{aligned}
    \end{equation}
To bound $\RR_1$, we apply a standard finite element inverse estimate, which gives
    \begin{equation} \label{i1}
    \begin{aligned}
        \RR_1 &\leq \normx{a}_{L^{\infty}(Q)} \normx{\na_x v_h}_Q^2 + \de h^2 \normx{\pa_t v_h}_Q^2 + \de^{-1} h^{-2} \normx{v_h}_Q^2 +
        \de h^2 \normx{\na_x a}_{L^{\infty}(Q)}^2 \normx{\na_x v_h}_Q^2 \\
        &\lesssim h^{-2} \normx{v_h}_{\Qact}^2 + \normx{v_h}_{\Qact}^2 + h^{-2} \normx{v_h}_{Q}^2 + h^{-2} \normx{v_h}_{\Qact}^2 
        \lesssim  h^{-2} \normx{v_h}_{\Qact}^2.
    \end{aligned}
    \end{equation}
    To estimate $\RR_2$, we apply the inverse equality \eqref{cutTrpoly}, the finite element inverse estimate, and the $L^2$ stability of ghost penalty \eqref{ghstab}, which together yield
    \begin{equation} \label{i2}
    \begin{aligned}
        \RR_2 &\leq \ga^{-1} h \normx{a}_{L^{\infty}(\Si_s)}^2 \normxss{\na v_h}^2 + \ga h^{-1} \normxss{v_h}^2 + \normxsz{v_h}^2 + \normxst{v_h}^2 + \snormgh{v_h}^2 \\
        &\lesssim \normx{\na v_h}_{\Qact}^2 + h^{-2} \normx{v_h}_{\Qact}^2 + h^{-1}\normx{v_h}_{\Qact}^2 + h^{-2}\normx{v_h}_{\Qact}^2
        \lesssim h^{-2}\normx{v_h}_{\Qact}^2.
    \end{aligned}  
    \end{equation}
    The proof is completed by combining \eqref{ii}, \eqref{i1} and \eqref{i2}.
\end{proof}

Thus, the condition number of the stiffness matrix can be bounded as stated in the following theorem.

\begin{THeorem} \label{cdnum}
    Let $A$ be the stiffness matrix of the finite element method \eqref{FEM}. Under the assumptions of Theorem~\ref{coercive}, we have
    \begin{equation*}
        \kappa(A) \lesssim h^{-2},
    \end{equation*}
    where $\kappa(A) = \normx{A}\normx{A^{-1}}$ with $\normx{\cdot}$ being the Euclidean matrix norm.
\end{THeorem}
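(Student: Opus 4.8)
The plan is to bound $\kappa(A) = \normx{A}\normx{A^{-1}}$ by estimating the maximal and minimal eigenvalues of $A$ (noting $A$ is not symmetric, but the Euclidean norm bounds follow from the Rayleigh-type quotients $\normx{A} = \sup_{U,W} \frac{U^T A W}{\normx{U}\normx{W}}$ and $\normx{A^{-1}} = (\inf_U \sup_W \frac{U^T A W}{\normx{U}\normx{W}})^{-1}$). The standard bridge between the algebraic and functional-analytic pictures is the isomorphism $\mu: \R^n \to V_h$, $U \mapsto v_h = \sum_i U_i \phi_i$, together with the mass-matrix equivalence $\normx{U}^2 \eqsim h^{d+1}\normx{v_h}_{\Qact}^2$ for a quasi-uniform mesh in $\R^{d+1}$ (this uses $V_h$ is defined on the fixed active mesh, so all the usual scaling arguments apply without any dependence on how $\partial Q$ cuts the mesh).

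First I would record the mass-matrix scaling $c h^{d+1}\normx{U}^2 \le \normx{\mu U}_{\Qact}^2 \le C h^{d+1}\normx{U}^2$. Next, for the upper bound on $\normx{A}$: for $U,W \in \R^n$ with $v_h = \mu U$, $w_h = \mu W$, write $U^T A W = A_h(v_h, w_h)$, apply the continuity estimate \eqref{Ahct} to get $A_h(v_h,w_h) \lesssim \normehs{v_h}\normeh{w_h}$, then bound both norms using \eqref{Etol2} (and $\normeh{\cdot} \le \normehs{\cdot}$) by $h^{-1}\normx{v_h}_{\Qact}$ and $h^{-1}\normx{w_h}_{\Qact}$, and finally convert back to $\normx{U},\normx{W}$ via the mass-matrix equivalence. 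This gives $\normx{A} \lesssim h^{-2} \cdot h^{d+1}\normx{U}\normx{W}/(\normx{U}\normx{W}) = h^{d-1}$. For the lower bound, i.e. the upper bound on $\normx{A^{-1}}$: given $U$, choose the test function $W = U$ itself; coercivity \eqref{Ahcoer} gives $U^T A U = A_h(v_h,v_h) \ge \tfrac12 \normeh{v_h}^2$, then \eqref{l2toE} gives $\normeh{v_h}^2 \gtrsim \normx{v_h}_{\Qact}^2 \gtrsim h^{d+1}\normx{U}^2$; hence $\sup_W \frac{U^TAW}{\normx{W}} \ge \frac{U^TAU}{\normx{U}} \gtrsim h^{d+1}\normx{U}$, so $\normx{A^{-1}} \lesssim h^{-(d+1)}$. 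Multiplying the two bounds yields $\kappa(A) \lesssim h^{d-1}\cdot h^{-(d+1)} = h^{-2}$, as claimed.

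The main obstacle, such as it is, is conceptual rather than technical: one must be careful that the functional-analytic estimates \eqref{l2toE} and \eqref{Etol2} are stated with the $\Qact$-norm on the right-hand side (not the $Q$-norm), so that the passage to the algebraic mass matrix is over the \emph{full} active mesh and therefore insensitive to the cut position — this is precisely where the ghost-penalty stabilization (via \eqref{ghL2}, \eqref{ghstab}, and the space-time Poincaré–Friedrichs inequality \eqref{stineq}) does its job. A secondary point to handle cleanly is that $A$ is nonsymmetric, so $\normx{A^{-1}}$ must be controlled through the inf–sup quotient rather than a symmetric eigenvalue bound; choosing the diagonal test function $W=U$ in the coercivity step is what makes this elementary. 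Beyond that, every step is a routine invocation of an already-established lemma combined with standard quasi-uniform scaling, so the proof is short.
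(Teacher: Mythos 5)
Your proof is correct and follows essentially the same route as the paper: the paper simply delegates the algebra to the cited abstract result (Theorem 2.16 of the ghost-penalty reference), which encapsulates exactly the argument you spell out — mass-matrix scaling $\normx{U}^2\eqsim h^{-(d+1)}\normx{\mu U}_{\Qact}^2$, continuity \eqref{Ahct} with \eqref{Etol2} for $\normx{A}\lesssim h^{d-1}$, and coercivity \eqref{Ahcoer} with \eqref{l2toE} for $\normx{A^{-1}}\lesssim h^{-(d+1)}$. Your explicit treatment of the nonsymmetry via the singular-value/inf--sup quotient and the observation that the equivalences are stated over $\Qact$ (hence cut-independent) match the intended use of the cited theorem.
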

\begin{proof}
The result follows directly from \cite[Theorem 2.16]{GURKAN2019466} combined with the norm equivalence relations \eqref{l2toE} and \eqref{Etol2}.
\end{proof}

\section{Numerical experiments}\label{Sec6}
In this section, we perform several numerical experiments to validate the theoretical analysis of the proposed method. In all examples, the moving domain is defined via a level set function $\phi(x,t)$ as $\Om(t) = \{ x \in \mathbb{R}^{d}: \phi(x,t) < 0 \}$. The relative errors in the $H^{1,0}$ and $L^2$ norms are respectively defined as
\begin{equation*}
    \frac{\normx{\afrac \na_x (u-u_h)}_Q}{\normx{\afrac \na_x u}_Q}, \quad 
    \frac{\normx{u-u_h}_Q}{\normx{u}_Q}.
\end{equation*}
Based on the error estimate \eqref{eerr}, the $H^{1,0}$-norm error is expected to satisfy $\normx{\afrac \na_x (u-u_h)}_{Q} \lesssim h \normx{u}_{2,Q}$ for any sufficiently smooth solution $u$. 

To handle numerical integration over curved domains, we adopt the approach from \cite{FRIES2017759}. First, a set of sample points is distributed within each element to identify the cut elements. For each cut element, a recursive subdivision algorithm is applied to generate valid subelements for numerical integration. On the uncut subelements, a standard Gaussian quadrature rule is employed for volume integration. For the valid cut subelements, two isoparametric mappings are constructed to define the surface and volume quadrature rules, respectively. We refer to \cite{FRIES2017759} for more details. In our experiments, we employ the quadratic isoparametric mappings and the fourth-order Gaussian quadrature rule for both surface and volume integration to preserve accuracy.

\subsection{Convergence studies for 1D case}
We present two 1D examples to examine the convergence behavior of the proposed method. The first is inspired by a Stefan-type problem, while the second involves a moving interval with a time-dependent diffusion coefficient. 
\begin{EXample} \label{ex1}
This example is adapted from a Stefan-type problem \cite{stefan}. The domain $\Om(t) = [0,s(t)]$ features a fixed left boundary and a moving right boundary defined by $s(t) = 2\al \sqrt{t+t_0}$, where $\al, t_0>0$ are constants (see Figure \ref{stefan} for an illustration). The corresponding level set function is given by
\begin{equation*}
    \phi(x,t) = x(x-s(t)).
\end{equation*}
We set $a(x,t) \equiv 1, f = 0, u(0,t)=1, u(s(t),t)=0, \al = 0.5, t_0 = 1.2$, and the initial data $u_0$ is chosen such that the exact solution  takes the form
\begin{equation*}
    u(x,t) = 1-\frac{1}{\text{erf}(\al)} \text{erf} \bk{ \frac{x}{2\sqrt{t+t_0}} },
\end{equation*}
where $\text{erf}(x)$ denotes the error function,
\begin{equation*}
    \text{erf}(x) = \frac{2}{\sqrt{\pi}} \int_0^x e^{-s^2} \rd s.
\end{equation*}
The final time is set to $T = 1$. 
\end{EXample}

We take the background domain as $\wOm = (0,1.5)$ and construct the initial space-time mesh $\mathcal{T}_h$ of $\wQ$ as follows. The spatial domain $\wQ$ and the time interval $I = (0, T)$ are first divided into $N_x = 12$ and $N_t = 8$ uniform subintervals, respectively, forming an $N_x \times N_t$ Cartesian grid. Each rectangle is then subdivided into two triangles to form a conforming triangulation (see Figure~\ref{stefan}). This initial mesh $\Th$ is subsequently refined uniformly four times, generating a sequence of meshes $\set{ \Th, \mathcal{T}_{h/2}, \mathcal{T}_{h/4}, \mathcal{T}_{h/8}, \mathcal{T}_{h/16} }$  by dividing each triangle into four congruent subtriangles at each refinement.

In the experiment, we choose the parameters  $\ga = 50, \ga_1 = 0.1$ and $\de = 0.2$. The relative $H^{1,0}$-and $L^2$-errors are computed and summarized in Table~\ref{ex1tab}. The results indicate that the $H^{1,0}$-error converges optimally at a rate of $O(h)$, which aligns with the theoretical analysis. We also observe an optimal convergence rate of $O(h^2)$ for the $L^2$-error. Furthermore, the condition number of the stiffness matrix is computed across different mesh sizes; as illustrated in Figure~\ref{stefan_err}, it scales as $O(h^{-2})$, confirming the theoretical prediction of Theorem~\ref{cdnum}.

\begin{figure}[htbp] 
\centering
	\includegraphics[width = 0.32\textwidth]{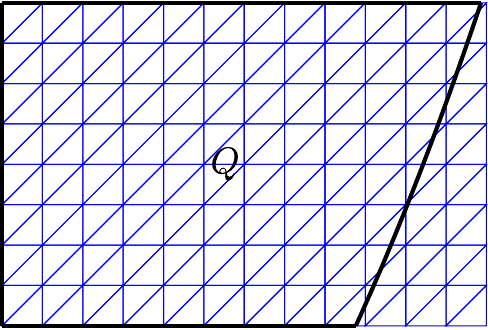} 
	\caption{The moving domain and the initial mesh of Example \ref{ex1}.}
    \label{stefan}
\end{figure} 

 \begin{figure}[htbp]
  \centering
  \begin{minipage}[c][5cm][t]{0.45\textwidth}
    \centering
    	\begin{tabular}{lllll}
        \toprule
        $h$ & $H^{1,0}$ error & order & $L^2$ error & order \\
        \midrule
        1/8   & 1.90E-02 & -     & 6.40E-04 & -     \\
		1/16  & 9.51E-03 & 1.00  & 1.74E-04 & 1.88  \\
		1/32  & 4.75E-03 & 1.00  & 4.72E-05 & 1.88  \\
		1/64  & 2.38E-03 & 1.00  & 1.22E-05 & 1.95  \\
		1/128 & 1.19E-03 & 1.00  & 3.11E-06 & 1.97  \\
       \bottomrule
	\end{tabular}
	\captionof{table}{Numerical errors of Example \ref{ex1}}
	\label{ex1tab}
  \end{minipage}
    \hfill 
  \begin{minipage}[c][5cm][t]{0.45\textwidth} 
    \centering
    \includegraphics[width=0.8\textwidth]{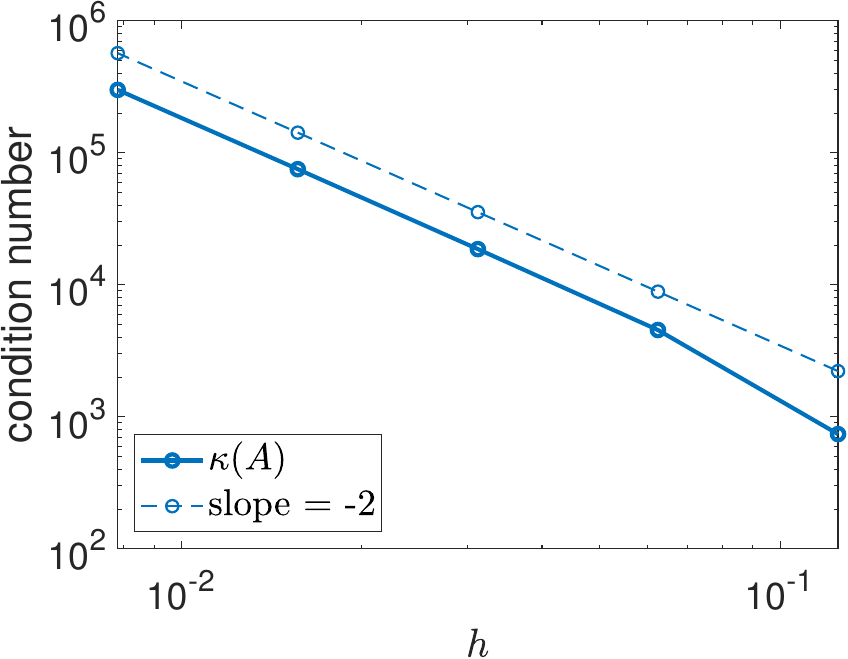} 
    \caption{The condition number of the stiffness matrix of Example \ref{ex1}.}
    \label{stefan_err}
  \end{minipage}
   \end{figure}

\begin{EXample} \label{ex2}
    Consider a moving interval with velocity $v(t)$, described by the level set function
\begin{equation*}
    \phi(x,t) = (x-a_0-v(t))(x-b_0-v(t)),
\end{equation*}
where $a_0 = 0.3, b_0 = 0.7$ and $v(t) = \pi \sin(2\pi t)/20$. The final time is set to $T = 1$. This velocity function causes the interval to oscillate periodically over time, as depicted in Figure~\ref{Sdomain}. The time-dependent diffusion coefficient is given by $a(x,t) = 0.5t \cos^2(x) + 0.1$. The source term $f$, boundary data $g$ and initial data $u_0$ are prescribed such that the exact solution takes the form
\begin{equation*}
    u(x,t) = \bk{\sin(2\pi x)\sin(2\pi t) + \cos(2\pi x)\cos(2\pi t)}e^{-t}.
\end{equation*}
\end{EXample}
The background domain is taken as  $\wOm = (0,1)$. The initial background mesh and its subsequent refinements are constructed following the same procedure as in Example~\ref{ex1}, with $N_x = N_t = 14$; see Figure~\ref{Sdomain} for an illustration. The parameters are chosen as $\ga = 100, \ga_1 = 0.1$ and $\de = 0.1$. 

The relative $H^{1,0}$-error, $L^2$-error, and the condition number of the stiffness matrix are summarized in Table~\ref{ex2tab} and visualized in Figure~\ref{Sdomain_err} respectively. Results show that the $H^{1,0}$-error again achieves the optimal convergence rate of $O(h)$, while the $L^2$-error converges at a rate even exceeding $O(h^2)$. In addition, the condition number grows as $O(h^{-2})$, which is consistent with our theoretical analysis.

\begin{figure}[htbp]
    \centering
	\includegraphics[height = 0.3\textwidth]{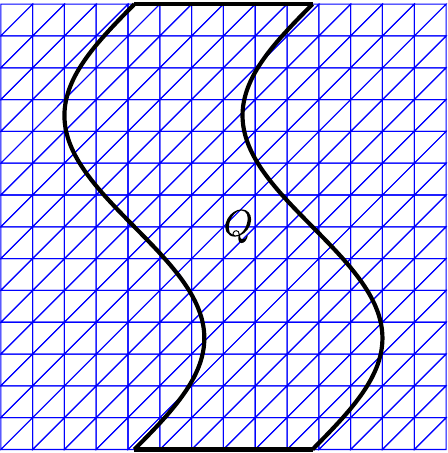}
	\caption{The moving domain and the initial mesh of Example \ref{ex2}.}
    \label{Sdomain}
\end{figure}

\begin{figure}[htbp]
  \centering
  \begin{minipage}[c][5.7cm][t]{0.45\textwidth}
    \centering
    	\begin{tabular}{lllll}
        \toprule
        $h$ & $H^{1,0}$ error & order & $L^2$ error & order \\
        \midrule
        1/14   & 1.75E-01 & -     & 5.68E-02 & -     \\
		1/28   & 7.69E-02 & 1.18  & 1.17E-02 & 2.28  \\
		1/56   & 3.66E-02 & 1.07  & 2.04E-03 & 2.52  \\
		1/112  & 1.80E-02 & 1.02  & 3.86E-04 & 2.41  \\
		1/224  & 8.94E-03 & 1.01  & 7.44E-05 & 2.37  \\
       \bottomrule
	\end{tabular}
	\captionof{table}{Numerical errors of Example \ref{ex2}}
	\label{ex2tab}
  \end{minipage}
    \hfill 
  \begin{minipage}[c][5.7cm][t]{0.45\textwidth} 
    \centering
    \includegraphics[width=0.8\textwidth]{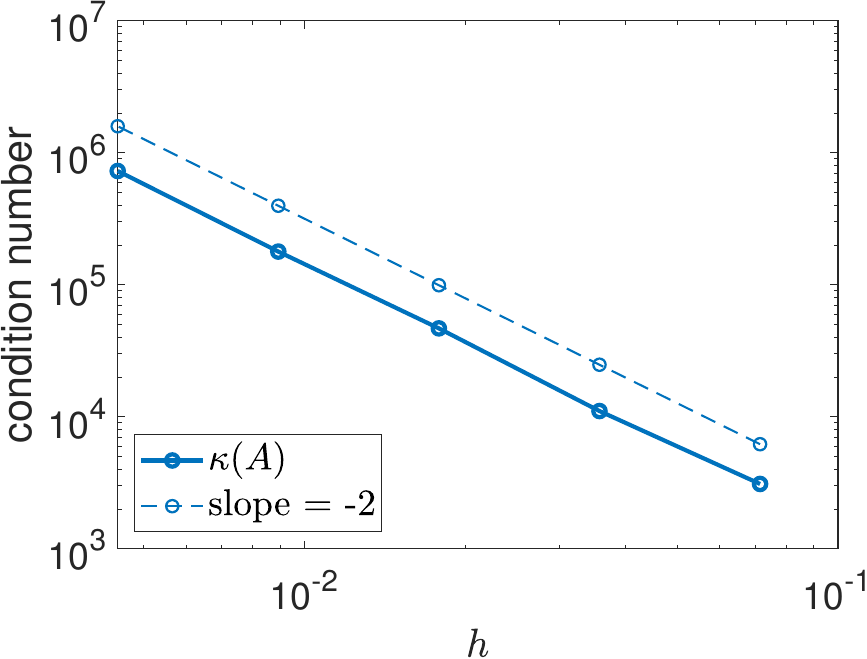} 
    \caption{The condition number of the stiffness matrix of Example \ref{ex2}.}
    \label{Sdomain_err}
  \end{minipage}
   \end{figure}

\subsection{Convergence studies for 2D case}
We now present 2D examples to evaluate the convergence behavior of the proposed method. The first involves a moving circle with a constant diffusion coefficient, while the second considers a complex geometry with a time-dependent variable diffusion coefficient. 
\begin{EXample} \label{ex3}
Consider a moving circular domain defined by the level set function
\begin{equation*}
    \phi(x,y,t) = (x-0.15\cos(2\pi t)-x_0)^2 + (y-0.15\sin(2\pi t)-y_0)^2 - r_0^2,
\end{equation*}
with $x_0 = 0.5, y_0 = 0.5, r_0 = \pi/12$, and the final time is set as $T = 1$. We set the diffusion coefficient $a(x,t) \equiv 1$, and define $f$, $g$, and $u_0$ such that the exact solution is given by
\begin{equation} \label{ue3d}
    u(x,y,t) = \bk{\sin(2\pi x)\sin(2\pi y)\sin(2\pi t) + \cos(2\pi x)\cos(2\pi y)\cos(2\pi t)}e^{-t}.
\end{equation}
\end{EXample}
The background domain is taken as $\wOm = (0,1)^2$. The initial space-time mesh is constructed by first dividing $\wQ$ into $N_x \times N_y \times N_t$ cuboids with $N_x = N_y = N_t = 3$, and then subdividing each cuboid into six tetrahedra (see Figure~\ref{circle}). This initial mesh is uniformly refined twice to obtain a finer triangulation $\Th$, where each tetrahedron is split into eight subtetrahedra. We take $\Th$ as the initial computational mesh and apply three further uniform refinements to generate the sequence $\set{ \Th, \mathcal{T}_{h/2}, \mathcal{T}_{h/4}, \mathcal{T}_{h/8} }$.

The parameters are chosen as $\gamma = 50, \gamma_1 = 0.1$, and $\delta = 0.2$. On each mesh in the sequence, we compute the relative $H^{1,0}$-error, $L^2$-error, and the condition number of the stiffness matrix. The results are summarized in Table~\ref{ex3tab} and visualized in Figure~\ref{circle_err}. Numerical results confirm that the $H^{1,0}$-error converges optimally at a rate of $O(h)$, while the $L^2$-error exhibits an $O(h^2)$ convergence rate. Furthermore, for sufficiently small mesh sizes, the condition number grows as $O(h^{-2})$, in agreement with the theoretical estimate.

\begin{figure}[htbp]
    \centering
	\includegraphics[width = 0.34\textwidth]{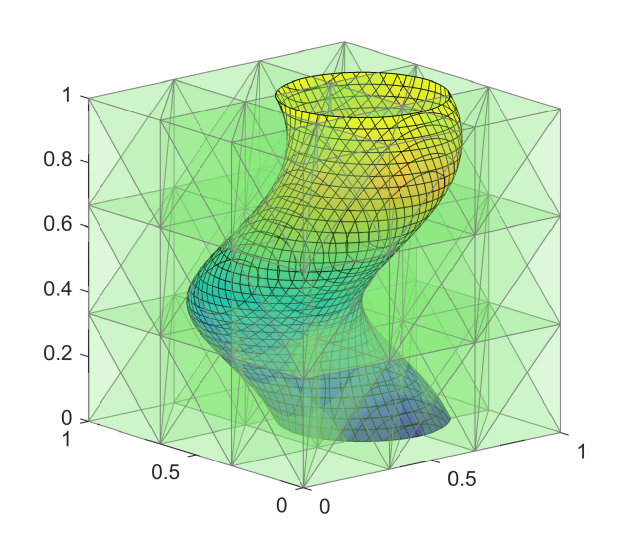}
	\caption{The moving domain and the initial mesh of Example \ref{ex3}.}
     \label{circle}
\end{figure}

\begin{figure}[htbp]
  \centering
  \begin{minipage}[c][5cm][t]{0.45\textwidth}
    \centering
    	\begin{tabular}{lllll}
        \toprule
        $h$ & $H^{1,0}$ error & order & $L^2$ error & order \\
        \midrule
        1/12 & 3.20E-01 & -     & 1.14E-01 & -     \\
		1/24 & 1.58E-01 & 1.02  & 2.91E-02 & 1.97  \\
		1/48 & 7.88E-02 & 1.01  & 7.25E-03 & 2.00  \\
		1/96 & 3.94E-02 & 1.00  & 1.82E-03 & 1.99  \\
       \bottomrule
	\end{tabular}
	\captionof{table}{Numerical errors of Example \ref{ex3}}
	\label{ex3tab}
  \end{minipage}
    \hfill 
  \begin{minipage}[c][5.3cm][t]{0.45\textwidth} 
    \centering
    \includegraphics[width=0.8\textwidth]{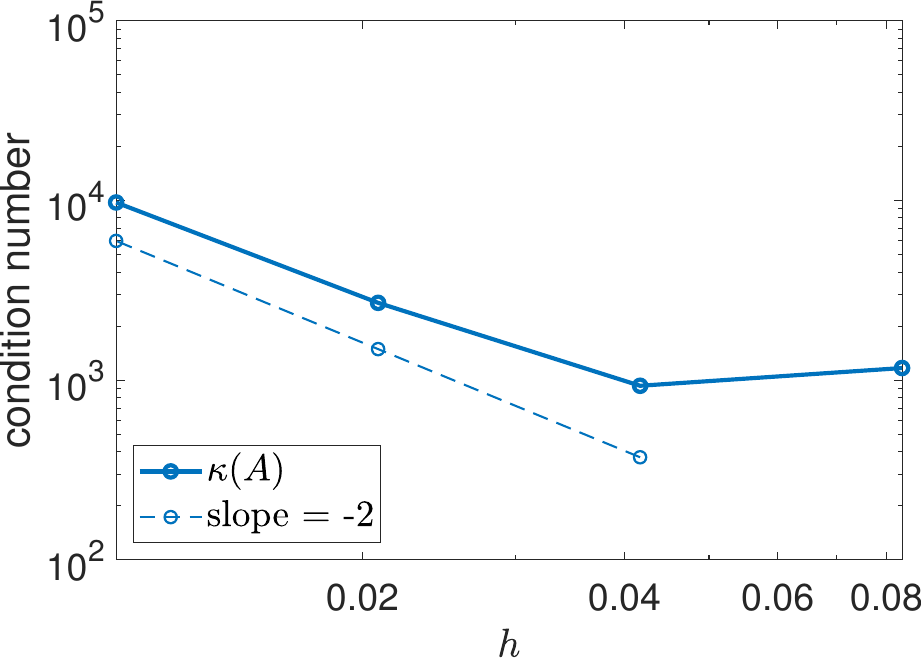} 
    \caption{The condition number of the stiffness matrix of Example \ref{ex3}.}
    \label{circle_err}
  \end{minipage}
   \end{figure}

\begin{EXample} \label{ex4}
    Consider a flower-shaped domain with a central hole, defined by the following level set functions. Let
\begin{equation*}
    X(x,t) = x - 0.1 \cos(\pi t), \quad Y(y,t) = y - 0.1 \sin(\pi t), \quad \theta = \arctan \left( \frac{Y(y,t)}{X(x,t)} \right),
\end{equation*}
and define
\begin{align*}
    \phi_1(x,y,t)& = X(x,t)^2 + Y(y,t)^2 - r^2 + \frac{r}{r_0} \cos(5\theta) \cos \bk{ \frac{2}{3}\pi t},\\
    \phi_2(x,y,t) &= X(x,t)^2 + Y(y,t)^2 - \left( r - \frac{1}{8}t^2 - \frac{\pi}{10} \right)^2,
\end{align*}
with the combined level set function
\begin{equation*}
    \phi(x,y,t) = \phi_1(x,y,t) \phi_2(x,y,t),
\end{equation*}
where $r = 0.7$ and $r_0 = 3.5$. The final time is set to $T = 1$. 

As illustrated in Figure~\ref{flower}, the flower-shaped domain undergoes both rotation and deformation over time, while the circular hole at its center gradually shrinks. The diffusion coefficient is taken as $a(x,t) = 0.5\sin^2(x y t) + 0.1$. The source term $f$, boundary data $g$, and initial value $u_0$ are chosen such that the exact solution coincides with that given in \eqref{ue3d}. 
\end{EXample}
The background domain is taken as $\wOm = (-1,1)^2$. The initial mesh and its refinement sequence are generated following the same procedure as in Example~\ref{ex3}, with $N_x=N_y=4, N_t=2$, as illustrated in Figure~\ref{flower}. We set the parameters as  $\ga = 100, \ga_1 = 0.1$ and $\de = 0.02$, and compute the relative $H^{1,0}$ and $L^2$ error, and the condition number $\kappa(A)$ of the stiffness matrix. The results are summarized in Table~\ref{ex4tab} and Figure~\ref{flower_err}.
Even with the complex motion of the physical domain, the proposed method still achieves optimal convergence rates—$O(h)$ for the $H^{1,0}$-error and $O(h^2)$ for the $L^2$-error—along with an optimal $O(h^{-2})$ growth in the condition number for sufficiently small mesh sizes.

\begin{figure}[htbp]
    \centering
	\includegraphics[width = 0.5\textwidth]{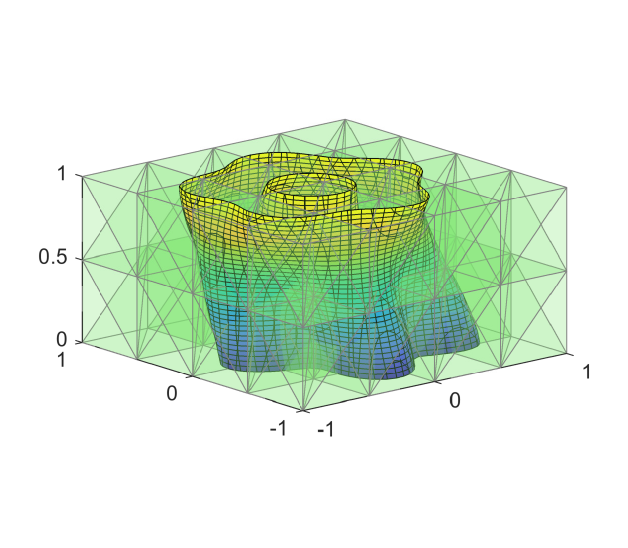}
	\caption{The moving domain and the initial mesh of Example \ref{ex4}.}
     \label{flower}
\end{figure}

\begin{figure}[htbp]
  \centering
  \begin{minipage}[c][5cm][t]{0.45\textwidth}
    \centering
    	\begin{tabular}{lllll}
        \toprule
        $h$ & $H^{1,0}$ error & order & $L^2$ error & order \\
        \midrule
        1/8  & 6.38E-01 & -     & 3.96E-01 & -     \\
		1/16 & 3.18E-01 & 1.00  & 1.41E-01 & 1.49  \\
		1/32 & 1.41E-01 & 1.18  & 3.32E-02 & 2.08  \\
		1/64 & 6.48E-02 & 1.12  & 6.28E-03 & 2.40  \\
       \bottomrule
	\end{tabular}
	\captionof{table}{Numerical errors of Example \ref{ex4}}
	\label{ex4tab}
  \end{minipage}
    \hfill 
  \begin{minipage}[c][5.3cm][t]{0.45\textwidth} 
    \centering
    \includegraphics[width=0.8\textwidth]{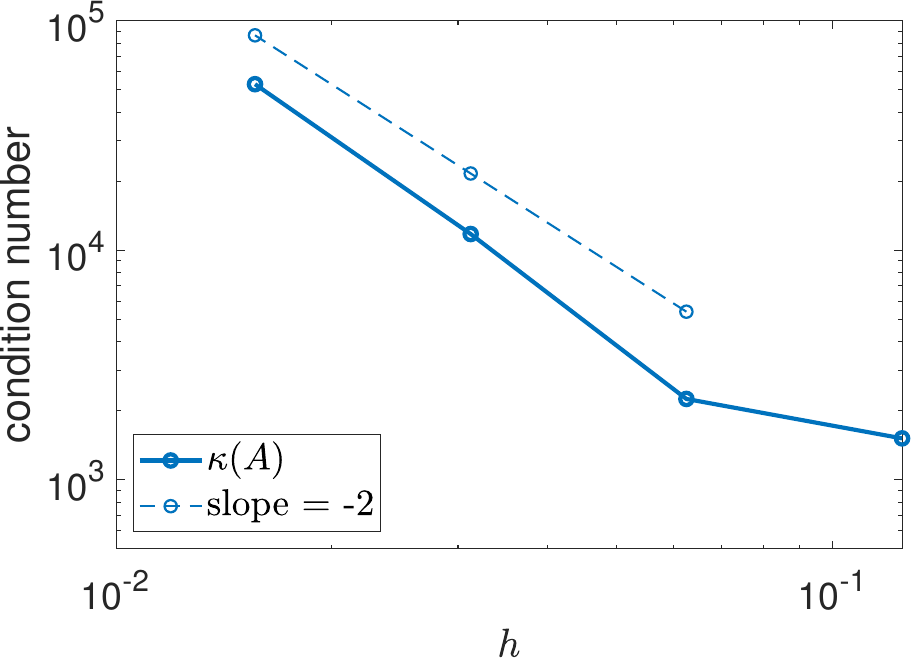} 
    \caption{The condition number of the stiffness matrix of Example \ref{ex4}.}
     \label{flower_err}
  \end{minipage}
   \end{figure}

\subsection{Small cut effect studies}
This subsection examines the sensitivity of the proposed method to small cut configurations. We revisit the moving interval defined in Example~\ref{ex2}, with parameters $a_0 = \pi/24, b_0 = \pi/6$, and a translational motion given by $v(t) = l + t/7$, where $l$ controls the initial shift of the domain. The final time is set to $T = 1$, and the background domain is taken as $\wOm = (0,1)$. The background mesh $\Th$ is constructed following the procedure in Example~\ref{ex2} using $N_x = N_t = 9$.

Varying the shift $l$ alters the relative position between the space-time domain $Q$ and the background mesh $\Th$, potentially leading to small-cut scenarios. We consider $l = \{10^{-5}j\}_{j=0}^{30000}$ and solve the parabolic moving-domain problem \eqref{P} for each value of $l$, using the same source term $f$, boundary data $g$, initial condition $u_0$, and exact solution as in Example~\ref{ex2}. The parameters are chosen as $\gamma = 50$ and $\delta = 0.2$. Two stabilization settings are compared: one with ghost penalty stabilization ($\gamma_1 = 0.1$) and one without any stabilization ($\gamma_1 = 0$). For both cases, we compute the condition number $\kappa(A)$ and the relative errors in the $H^{1,0}$- and $L^2$-norms. The results are presented in Figure~\ref{smallcut}.

\begin{figure}[htbp]
	\centering
	\includegraphics[width = 0.23\textwidth]{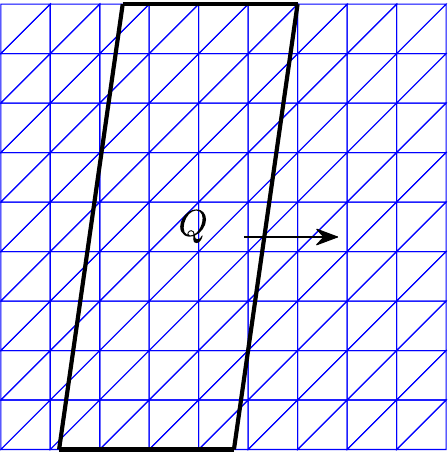}
	\caption{The background mesh $\Th$ and space-time domain $Q$. As we increase $l$, $Q$ moves towards the right direction.}
\end{figure} 

Figure \ref{smallcut}(left) clearly shows that without stabilization, the condition number is highly sensitive to the relative position between \(Q\) and $\Th$, varying from \(10^3\) to about \(10^{14}\). In contrast, with ghost penalty stabilization, \(\kappa(A)\) remains stable and consistently bounded across different values of \(l\). This confirms that the stabilized method is robust with respect to the cut position of the domain relative to the background mesh.

Figure \ref{smallcut}(right) further illustrates that in the absence of stabilization, both the $H^{1,0}$- and \(L^2\)-errors exhibit large fluctuations as \(l\) varies—a likely consequence of ill-conditioning. When ghost penalty is applied, both errors behave stably with only minimal variation.

\begin{figure}[H]
	\makebox[\textwidth][c]
	{\subfigure{\includegraphics[width = 0.34\textwidth]{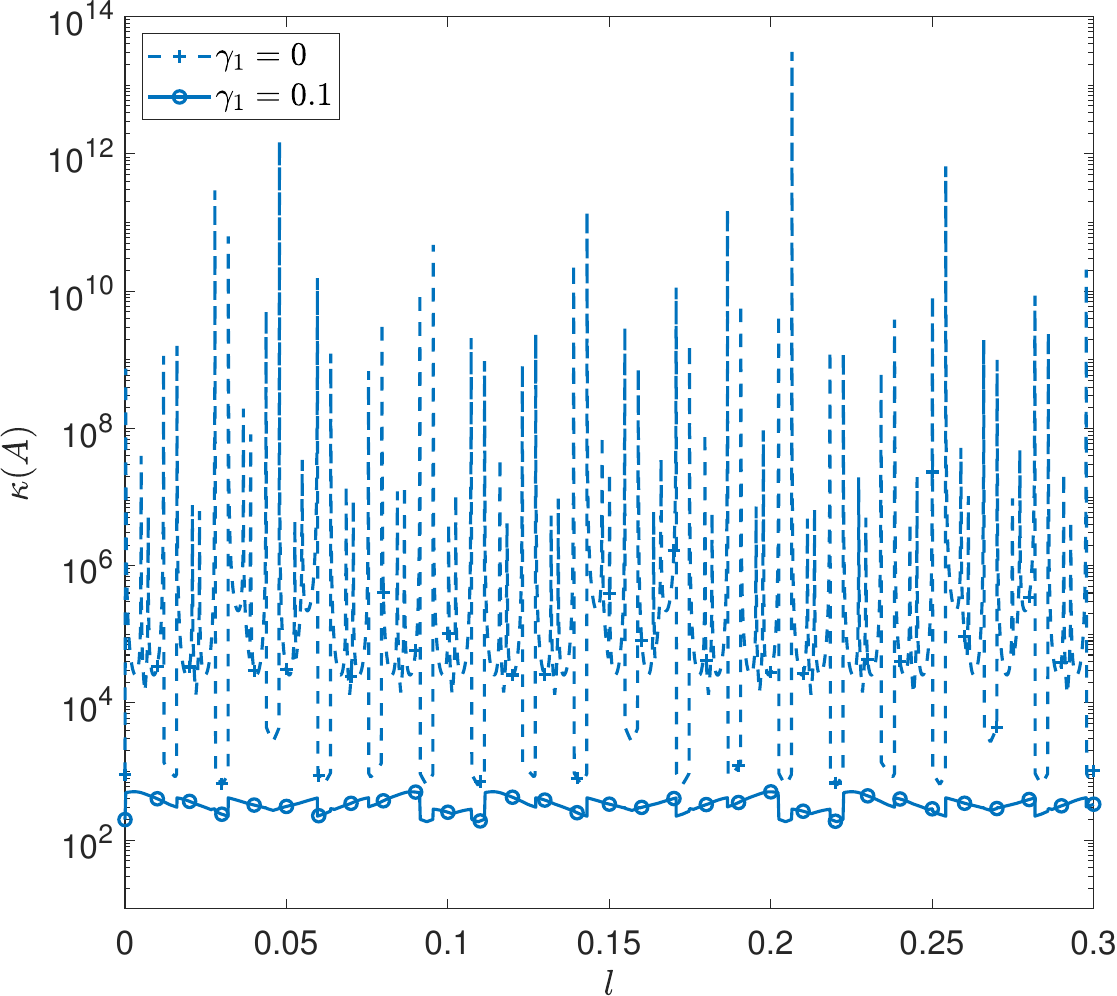}} \hspace{10mm}
	\subfigure{\includegraphics[width = 0.34\textwidth]{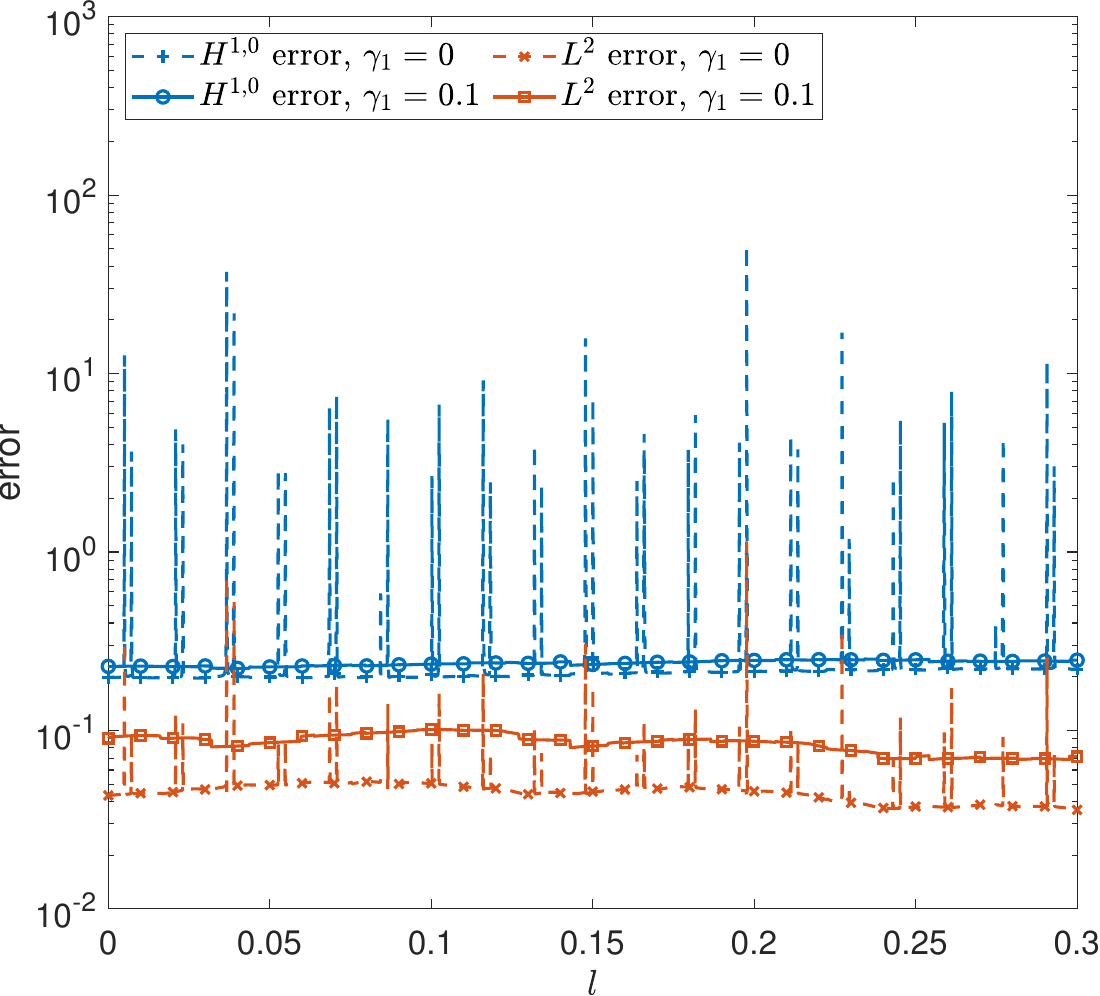}}} 
	\caption{Left: The condition number of the stiffness matrix. Right: The relative error.}
     \label{smallcut}
\end{figure}

\subsection{Boundary layer problem}
It is well known that convection-dominated convection-diffusion problems often develop boundary layers, where the solution exhibits sharp gradients. In such cases, standard FEMs typically deliver suboptimal accuracy and tend to produce spurious oscillations near the boundaries \cite{SOLD}. The streamline upwind Petrov–Galerkin (SUPG) method, on the other hand, has been widely recognized as an effective stabilization technique that suppresses these oscillations by introducing numerical diffusion along the streamlines \cite{supg}.

Interpreting the parabolic equation \eqref{P} as a convection-diffusion problem in the space-time setting, it is natural to expect similar challenges when $a(x,t) \ll 1$. In this subsection, we demonstrate the capability of our method in handling a boundary layer problem, showing that the incorporated SUPG stabilization successfully eliminates numerical oscillations while maintaining accuracy.

We consider a 1D boundary layer problem with the same domain configuration as in Example~\ref{ex2}. Let $a(x,t) = \ep = 2\times 10^{-3}$, $f = 1$, $g=u_0=0$, and take the background domain as $\wQ = (0,1) \times (0,1)$. 
The initial mesh $\Th$ is generated following the procedure in Example~\ref{ex2} with  $N_x=N_t=7$, and is uniformly refined four times to obtain the computational mesh $\mathcal{T}_{h/16}$. Since \(\varepsilon\) is smaller than the characteristic mesh size, the solution exhibits characteristics analogous to those observed in convection-dominated convection-diffusion problems.

The parameters are chosen as $\ga_1=0.1$, $\ga = 20$ and $\de = \de_c/\ep$, where $\de_c = 0.3$ corresponds to the stabilized case and  $\de_c = 0$ to the unstabilized case. The numerical solution is computed on \(\mathcal{T}_{h/16}\) for both scenarios and plotted in Figure~\ref{boundary_layer}.

As shown in the figure, without stabilization the numerical solution exhibits spurious oscillations near the boundary. In contrast, when SUPG stabilization is applied, these oscillations are effectively suppressed, resulting in a smooth and physically consistent solution profile.

\begin{figure}[htbp]
	\makebox[\textwidth][c]
	{\subfigure{\includegraphics[width = 0.45\textwidth]{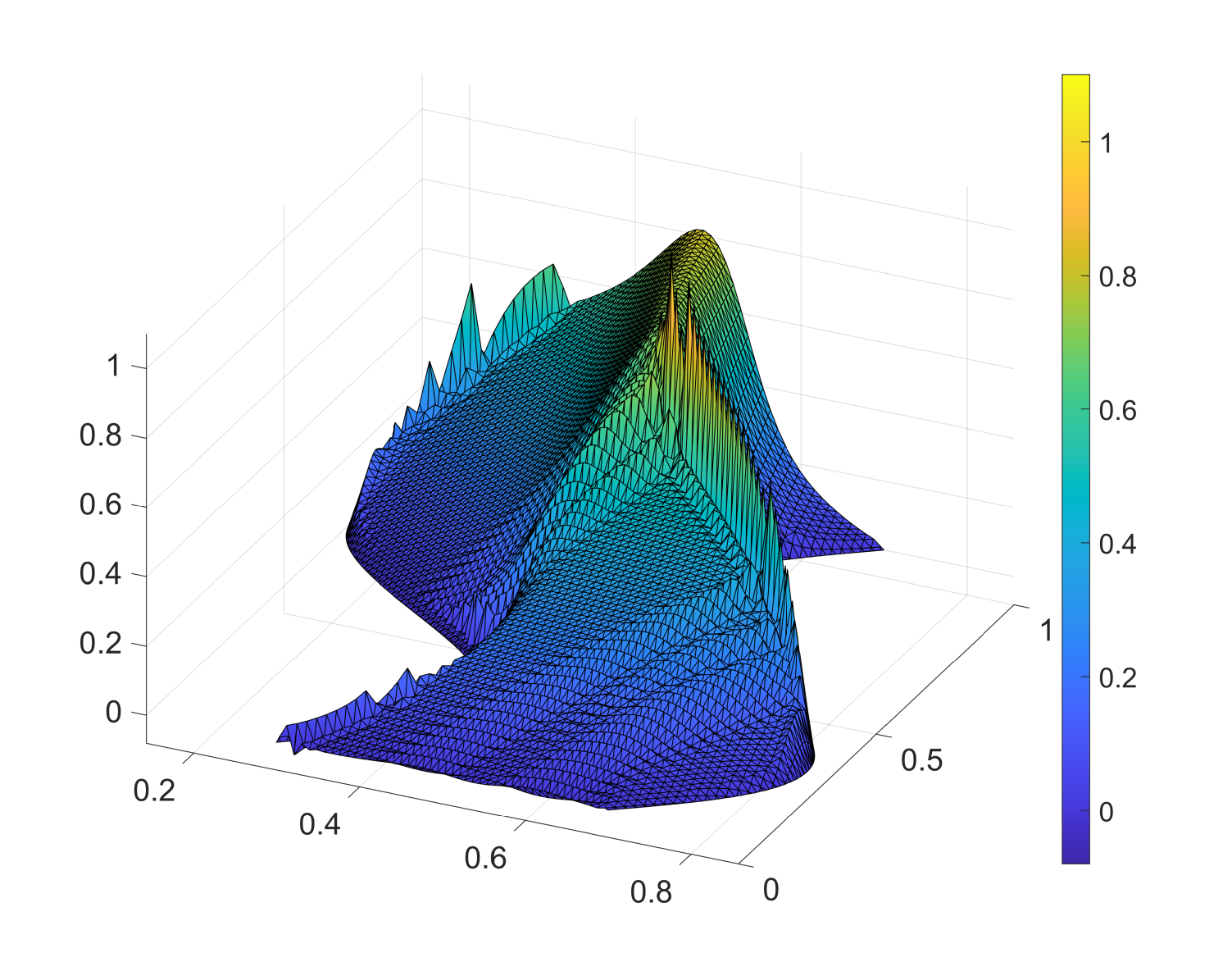}} 
	\subfigure{\includegraphics[width = 0.45\textwidth]{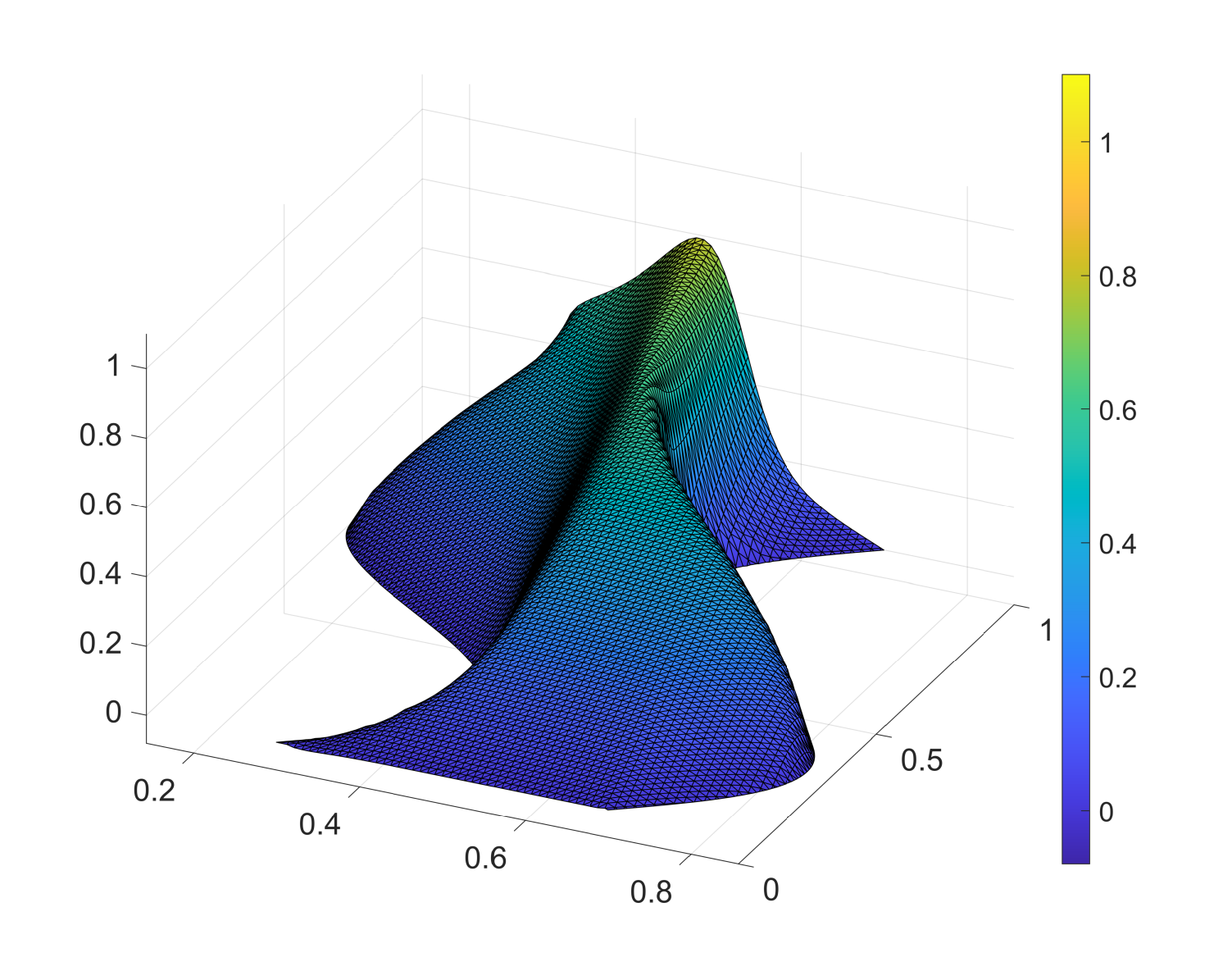}}} 
	\caption{Left: The method without stabilization ($\de_c = 0$). Right: The method with stabilization($\de_c = 0.3$).}
     \label{boundary_layer}
\end{figure} 

\section{Conclusions}\label{Sec7}
In this work, we have presented a novel unfitted space-time FEM designed for parabolic problems on moving domains, utilizing unstructured meshes. The key contributions of this study can be summarized as follows.

To ensure the robustness of the proposed method, two stabilization techniques were introduced. First, the ghost penalty stabilization was employed to effectively overcome the small cut problem, thereby controlling the condition number of the stiffness matrix. Second, the streamline upwind Petrov–Galerkin (SUPG) scheme was incorporated to stabilize the temporal advection term.

From a theoretical perspective, we established a rigorous a priori error estimate in a discrete energy norm, demonstrating that our method achieves the optimal convergence rate with respect to the mesh size. Furthermore, a space-time Poincaré–Friedrichs inequality was proved, which is essential for the condition number analysis of the overall formulation.

Numerical experiments in one and two spatial dimensions $(d = 1, 2)$ confirmed these theoretical findings. The results successfully verified the optimal convergence rate for the relative $H^{1,0}$ error and the optimal growth rate for the condition number of the stiffness matrix. Additional tests clearly demonstrated the crucial role of both the ghost penalty and SUPG stabilizations in maintaining numerical stability.

For future work, we plan to focus on the development of adaptive algorithms based on the proposed method and explore its potential extensions to more complex parabolic moving interface problems.

\bibliography{ref.bib}

\end{document}